\providecommand{\U}[1]{\protect\rule{.1in}{.1in}}
\newtheorem{theorem}{Theorem}
\theoremstyle{plain}
\newtheorem{corollary}{Corollary}
\newtheorem{example}{Example}
\newtheorem{lemma}{Lemma}
\numberwithin{equation}{section}
\begin{document}
   \title
{On the Structure of the Generalized Group of Units}

\author{Therrar Kadri and Mohammad El-Hindi}

\address{Therrar Kadri \newline	Department of Pedagog,
	Lebanese University.}\email{therrar.kadri@ul.edu.lb}
	\address{Mohammad Elhindi\newline  Department of Mathematics and Computer
		Science, Faculty of Science, Beirut Arab University, Beirut, Lebanon  } \email{mohammadyhindi98@gmail.com}

\begin{abstract}
Let $R$ be a finite commutative ring with identity and $U(R)$ be its group of
\ units. In 2005, El-Kassar and Chehade presented a ring structure for $U(R)$
and as a consequence they generalized this group of units to the generalized
group of units $U^{k}\left(  R\right)  $ defined iteratively as the group of
the units of $U^{k-1}(R)$, with $U^{1}\left(  R\right) =U(R) $. In this paper,
we examine the structure of this group, when $R=%
\mathbb{Z}
_{n}.$ We find a decomposition of $U^{k}\left(
\mathbb{Z}
_{n}\right)  $ as a direct product of cyclic groups for the general case of
any $k$, and we study when these groups are boolean and trivial. We also show
that this decomposition structure is directly related to the Pratt Tree primes.

\end{abstract}
\keywords{Commutative rings; Finite rings; Group of units; Cylic groups; Generalized
group of units; Pratt Tree}
\maketitle

\section{Introduction}

Let $R$ be a finite commutative ring with identity and let $U(R)$ denote its
group of units. The fundamental theorem of finite abelian groups states that
any finite abelian group is isomorphic to a product of cyclic groups. That
is,
\begin{equation}
U(R)\thickapprox\mathbb{Z}_{n_{1}}\times\mathbb{Z}_{n_{2}}\times
\cdots\times\mathbb{Z}_{n_{i}}. \label{1stdec}%
\end{equation}
The problem of determining the structure of the group of units of any
commutative ring $R$ is an open problem and has received lots of attention.
However, the problem is solved for certain classes for example the ring of
integers modulo $n$, $\mathbb{Z}_{n}$, see \cite{6}, and the factor ring of
Gaussian integer modulo $\beta$, $\mathbb{Z[}\mathbf{i}\mathbb{]}/<\beta>$,
see Cross \cite{1}. Also Smith and Gallian in \cite{9}, solved the problem of
decomposing the group of units of the finite ring $F[x]/<h(x)>$, where $F$ is
a finite field and $h(x)$ is polynomial in $F[x]$.

In 2006, a generalization for the group of units of any finite commutative
ring $R$ with identity, was introduced by El-Kassar and Chehade \cite{4}. They
proved that the group of units of a commutative ring $R$; $U(R)$; supports a
ring structure and this has made it possible to define the second group of
units of $R$ as, $U^{2}(R)=U(U(R))$. Extending this definition to the k-th
level, the k-th group of units is defined as, $U^{k}(R)=U(U^{k-1}(R))$. On the
other hand the decomposition in (\ref{1stdec}) can be generalized so that
$U^{k}(R)\thickapprox U^{k-1}\left(  \mathbb{Z}_{n_{1}}\right)  \times
U^{k-1}\left(  \mathbb{Z}_{n_{2}}\right)  \times\cdots\times U^{k-1}\left(
\mathbb{Z}_{n_{i}}\right)  .$ For example, if we consider $R=\mathbb{Z}%
[\mathbf{i}]/\left\langle p^{n}\right\rangle $, the factor ring of Gaussian
integer modulo $p^{n}$, where $p$ is an odd prime in $\mathbb{Z}$ of the form
$p\equiv3(\operatorname{mod}4)$. Cross \cite{1} determined the structure of
the group of units of $\mathbb{Z}[\mathbf{i}]/\left\langle p^{n}\right\rangle
$ as $U(\mathbb{Z}[\mathbf{i}]/\left\langle p^{n}\right\rangle )\thickapprox
\mathbb{Z}_{p^{n-1}}\times\mathbb{Z}_{p^{n-1}}\times\mathbb{Z}_{p^{2}-1}$.
Thus structure of $U^{k}(\mathbb{Z}[\mathbf{i}]/\left\langle p^{n}%
\right\rangle )$ can be examined through the isomorphism $U^{k}(\mathbb{Z}%
[\mathbf{i}]/\left\langle p^{n}\right\rangle )\thickapprox U^{k-1}\left(
\mathbb{Z}_{p^{n-1}}\right)  \times U^{k-1}\left(  \mathbb{Z}_{p^{n-1}%
}\right)  \times U^{k-1}\left(  \mathbb{Z}_{p^{2}-1}\right)  $. Arising from
all finite commutative rings $R$ with identity, the structure of $U^{k}(R)$ is
obtained through the structure the generalized group of units of
$\mathbb{Z}_{n}$.

Moreover, let $R$ be any finite ring with $\left\vert R\right\vert \,>1$.
Since $0\notin U(R)$, we have $\left\vert U(R)\right\vert <\left\vert
R\right\vert \,$\ and hence $\left\vert U^{k}\left(  R\right)  \right\vert
<\left\vert U^{k-1}\left(  R\right)  \right\vert $. Thus, $U^{k}\left(
R\right)  $ must eventually become a boolean ring and $U^{k+1}\left(
R\right)  $ is the trivial group. This mean the iterative structures of
$U^{i}\left(  R\right)  $ will reach the trivial group. These problems were
considered by some authors and arose a problem of determining all finite
commutative rings $R$ such that $U^{i}(R)$ is boolean or trivial group. Also
some considered the problem when $U^{i}(R)$ is a cyclic group for some rings
$R$ and values of $i$. El-Kassar and Chehade \cite{4} solved both problems
completely for $R=%
\mathbb{Z}
_{n}$ and $k=2$. Later, Kadri and El-Kassar in \cite{5}, considered the
problem for the case when $R=%
\mathbb{Z}
_{n}$ and $k=3$ and also provided a complete solution for these two problems.

In this paper, we examine the structure of the generalized group of units of $%
\mathbb{Z}
_{n}$. The structure is discussed by considering the two possible factors of
$n$ which are $2^{a}$ and $p_{i}^{\alpha_{i}},$ where $p_{i}$ is an odd prime
integer. Thus, we find a decomposition of $U^{k}\left(
\mathbb{Z}
_{n}\right)  $ as a direct product of cyclic groups for the general case of
any $k$. Also we examine the problem of having $U^{k}\left(  \mathbb{Z}%
_{n}\right)  $, a boolean ring and those that are trivial. We solve the
problem completely when $n=2^{\alpha}$, while the case when $n=p^{\alpha},p$
is an odd integer, is examined and some necessary conditions are given. Also
we give some properties of having $U^{k}\left(  \mathbb{Z}_{n}\right)  $ a
boolean or a trivial group. Eventually, we show that this decomposition
structure is directly related to the Pratt Tree primes, illustrated in an
example showing this relation.

\section{Some Preliminaries}

Let $R$ be the ring of integers modulo $n$, $\mathbb{Z}_{n}$. The
decomposition of the group of units of $\mathbb{Z}_{n}$, $U\left(
\mathbb{Z}_{n}\right)  $ can be found in \cite{6} stated in the following Lemma.

\begin{lemma}
\label{int 1}The group of units of $\mathbb{Z}_{n}$ when $n$ is a prime power
integer is given by
\end{lemma}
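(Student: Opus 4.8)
The plan is to handle the odd-prime case and the case $p=2$ separately, since the two exhibit genuinely different multiplicative behaviour. In both situations the order of the group is fixed by Euler's function, $|U(\mathbb{Z}_{p^{\alpha}})|=\varphi(p^{\alpha})=p^{\alpha-1}(p-1)$, so the only real content is to exhibit the cyclic decomposition by locating elements of the correct orders.

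For $p$ an odd prime I would first recall the classical fact that $U(\mathbb{Z}_{p})\cong\mathbb{Z}_{p-1}$: it is a finite subgroup of the multiplicative group of the field $\mathbb{Z}_{p}$, hence cyclic. Next I would produce a generator of $U(\mathbb{Z}_{p^{\alpha}})$ by a lifting argument. The crucial computation is the congruence $(1+p)^{p^{k}}\equiv 1+p^{k+1}\pmod{p^{k+2}}$ for every $k\ge 0$, which I would prove by induction on $k$ via the binomial theorem; the point that makes it go through is that for $p$ odd the second binomial term $\binom{p}{2}p^{2(k+1)}$ already lies in $p^{k+3}\mathbb{Z}$. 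This shows that $1+p$ has order exactly $p^{\alpha-1}$ modulo $p^{\alpha}$. If $g$ is an integer whose reduction mod $p$ generates $U(\mathbb{Z}_{p})$, then its image in $U(\mathbb{Z}_{p^{\alpha}})$ has order a multiple of $p-1$, so $g^{p^{\alpha-1}}$ has order exactly $p-1$; since $\gcd(p-1,p^{\alpha-1})=1$, the product of $g^{p^{\alpha-1}}$ with $1+p$ has order $p^{\alpha-1}(p-1)=|U(\mathbb{Z}_{p^{\alpha}})|$, so the group is cyclic, i.e.\ $U(\mathbb{Z}_{p^{\alpha}})\cong\mathbb{Z}_{p^{\alpha-1}(p-1)}$.

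For $p=2$ the cases $\alpha=1$ and $\alpha=2$ are immediate: $U(\mathbb{Z}_{2})=\{1\}$ and $U(\mathbb{Z}_{4})=\{1,3\}\cong\mathbb{Z}_{2}$. For $\alpha\ge 3$ I would show $U(\mathbb{Z}_{2^{\alpha}})=\langle-1\rangle\times\langle 5\rangle\cong\mathbb{Z}_{2}\times\mathbb{Z}_{2^{\alpha-2}}$. The analogue of the lifting congruence is $5^{2^{k}}\equiv 1+2^{k+2}\pmod{2^{k+3}}$ for $k\ge 0$, again proved by induction on $k$; here squaring introduces the extra factor of $2$ that shifts the exponent, which is precisely why the statement needs $\alpha\ge 3$. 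This yields that $5$ has order $2^{\alpha-2}$ modulo $2^{\alpha}$. One then observes that every power of $5$ is $\equiv 1\pmod 4$ whereas $-1\equiv 3\pmod 4$, so $\langle-1\rangle\cap\langle 5\rangle=\{1\}$; the internal product $\langle-1\rangle\times\langle 5\rangle$ therefore has order $2\cdot 2^{\alpha-2}=2^{\alpha-1}=|U(\mathbb{Z}_{2^{\alpha}})|$, which finishes the proof.

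I expect the only genuine obstacle to be the two inductions that pin down the exact $p$-adic valuations of $(1+p)^{p^{k}}-1$ and of $5^{2^{k}}-1$; once those order computations are in place, the counting arguments close out every case, and the remaining ingredients (cyclicity of $U(\mathbb{Z}_{p})$ and the tiny cases $2^{1},2^{2}$) are routine.
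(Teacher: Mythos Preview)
Your argument is correct and is the standard textbook proof: you pin down $|U(\mathbb{Z}_{p^{\alpha}})|$ by Euler's function, use the $p$-adic lifting congruence $(1+p)^{p^{k}}\equiv 1+p^{k+1}\pmod{p^{k+2}}$ to show $1+p$ has order $p^{\alpha-1}$ for odd $p$, combine with a lifted primitive root to get a generator, and treat $p=2$ via $\langle -1\rangle\times\langle 5\rangle$ using the analogous congruence $5^{2^{k}}\equiv 1+2^{k+2}\pmod{2^{k+3}}$. All of this is sound; the only cosmetic gap between your conclusion and the paper's formulation is that you end with $U(\mathbb{Z}_{p^{\alpha}})\cong\mathbb{Z}_{p^{\alpha-1}(p-1)}$, and one more line invoking the Chinese Remainder Theorem (since $\gcd(p-1,p^{\alpha-1})=1$) gives the paper's split form $\mathbb{Z}_{p-1}\times\mathbb{Z}_{p^{\alpha-1}}$.

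As for comparison with the paper: there is nothing to compare. The paper does not prove this lemma at all; it simply imports the result from Gallian's \emph{Contemporary Abstract Algebra} (reference~\cite{6} in the paper) and uses it as a black box throughout. Your write-up therefore supplies strictly more than the paper does on this point, and the argument you outline is precisely the one a reader would find upon following that citation.
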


\begin{enumerate}
\item $U\left(  \mathbb{Z}_{2}\right)  \approx\left\{  0\right\}  $,

\item $U\left(  \mathbb{Z}_{2^{a}}\right)  \approx\mathbb{Z}_{2}%
\times\mathbb{Z}_{2^{a-2}}$ when $a\geq2$,

\item $U\left(  \mathbb{Z}_{p^{\alpha}}\right)  \approx\mathbb{Z}_{p-1}%
\times\mathbb{Z}_{p^{\alpha-1}}$ when $\alpha\geq1$.
\end{enumerate}

Thus the above isomorphism gives the structure of any group of units $U\left(
\mathbb{Z}_{n}\right)  $. If $n=2^{a}p_{1}^{\alpha_{1}}\cdots p_{i}%
^{\alpha_{i}}$ be the decomposition of $n$ into product of distinct prime
powers. Then
\[
\mathbb{Z}_{n}\cong\mathbb{Z}_{2^{a}}\oplus\mathbb{Z}_{p_{1}^{\alpha_{1}}%
}\oplus\cdots\oplus\mathbb{Z}_{p_{i}^{\alpha_{i}}}%
\]
and
\[
U\left(  \mathbb{Z}_{n}\right)  \thickapprox U\left(  \mathbb{Z}_{2^{a}%
}\right)  \times U\left(  \mathbb{Z}_{p_{1}^{\alpha_{1}}}\right)  \times
\cdots\times U\left(  \mathbb{Z}_{p_{i}^{\alpha_{i}}}\right)  .
\]
Moreover, we can conclude from this decomposition that $U\left(
\mathbb{Z}_{n}\right)  $ is a trivial group if and only if $n=1$ or $2.$
$U\left(  \mathbb{Z}_{n}\right)  $ is boolean ring for $a=2$ or $3$ and when
$U\left(  \mathbb{Z}_{p_{i}^{\alpha_{i}}}\right)  \approx\mathbb{Z}_{p_{i}%
-1}\times\mathbb{Z}_{p_{i}^{\alpha_{i}-1}}\approx\mathbb{Z}_{2}$ then
$\alpha_{i}=1$ and $p_{i}=3$. Then $U\left(  \mathbb{Z}_{n}\right)  $ is a
boolean, when $n=2^{2},2^{3},2^{2}\times3$ or $2^{3}\times3.$

El-Kassar and Chihade \cite{4}, introduced a generalization of the group of
units as the $k^{th}$ group of units of commutative ring with identity $R$
denoted as $U^{k}(R)$. The definition is based on the following theorem.

\begin{theorem}
If a group (G,*) is isomorphic to the additive group (R,+) of the ring
(R,+,.), then there is an operation $\oplus$ on G such that (G, *, $\oplus$)
is a ring isomorphic to (R,+,.).
\end{theorem}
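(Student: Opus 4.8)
The plan is to construct the ring structure on $G$ by transporting the ring operations of $R$ along the given group isomorphism. Let $\varphi\colon (G,*)\to (R,+)$ be the group isomorphism assumed in the hypothesis. Since $\varphi$ is a bijection, we may define a second binary operation $\oplus$ on $G$ by the rule $g\oplus h=\varphi^{-1}\bigl(\varphi(g)\cdot\varphi(h)\bigr)$ for all $g,h\in G$, where $\cdot$ is the multiplication of $R$. The claim is that $(G,*,\oplus)$ is a ring and that $\varphi$ is a ring isomorphism from $(G,*,\oplus)$ onto $(R,+,\cdot)$.

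First I would verify that $\varphi$ respects both operations: by hypothesis $\varphi(g*h)=\varphi(g)+\varphi(h)$, and by the very definition of $\oplus$ we have $\varphi(g\oplus h)=\varphi(g)\cdot\varphi(h)$. Then I would pull each ring axiom of $(R,+,\cdot)$ back through $\varphi^{-1}$. Associativity and commutativity of $\oplus$ follow from the corresponding properties of $\cdot$ since $\varphi$ is a bijection intertwining the two; for instance, $g\oplus(h\oplus k)$ and $(g\oplus h)\oplus k$ have the same image under the injective map $\varphi$, namely $\varphi(g)\cdot\varphi(h)\cdot\varphi(k)$, hence they are equal. The additive group axioms for $(G,*)$ are already given. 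The multiplicative identity of $G$ is $\varphi^{-1}(1_R)$, and the two distributive laws transfer in the same manner: applying $\varphi$ to $g\oplus(h*k)$ gives $\varphi(g)\cdot(\varphi(h)+\varphi(k))=\varphi(g)\cdot\varphi(h)+\varphi(g)\cdot\varphi(k)=\varphi\bigl((g\oplus h)*(g\oplus k)\bigr)$, and injectivity of $\varphi$ finishes it. This shows $(G,*,\oplus)$ satisfies all ring axioms and that $\varphi$ is by construction a bijective homomorphism of rings, i.e.\ a ring isomorphism.

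There is no serious obstacle here; the argument is essentially the standard "transport of structure" lemma. The only point requiring a little care is bookkeeping: one must consistently treat $*$ as the additive operation of the new ring and $\oplus$ as its multiplication, and check that the zero and one of $(G,*,\oplus)$ are $\varphi^{-1}(0_R)$ and $\varphi^{-1}(1_R)$ respectively. If the ambient definition of "ring" in the paper does not require a multiplicative identity, that part of the verification may simply be dropped; otherwise it is immediate from $\varphi(\varphi^{-1}(1_R)\oplus g)=1_R\cdot\varphi(g)=\varphi(g)$. I would present the proof in two or three lines of this transport argument, emphasizing that every axiom for $\oplus$ is inherited from the matching axiom for $\cdot$ because $\varphi$ is a bijection that carries $\oplus$ to $\cdot$ and $*$ to $+$.
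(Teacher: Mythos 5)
Your proof is correct: the transport-of-structure argument, defining $g\oplus h=\varphi^{-1}\bigl(\varphi(g)\cdot\varphi(h)\bigr)$ and pulling the ring axioms back through the bijection, is exactly the standard argument behind this result. The paper itself states the theorem without proof (it is quoted from El-Kassar and Chehade \cite{4}), and your write-up supplies precisely the verification that reference relies on, so there is nothing to add beyond your bookkeeping of the zero $\varphi^{-1}(0_R)$ and identity $\varphi^{-1}(1_R)$.
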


Now, since $U(R)\thickapprox\mathbb{Z}_{n_{1}}\times\mathbb{Z}_{n_{2}}%
\times\cdots\times\mathbb{Z}_{n_{i}}$,\ we obtain that $U(R)$ is a ring
isomorphic to the direct sum of $\mathbb{Z}_{n}$'s. That is, $U(R)\cong%
\mathbb{Z}_{n_{1}}\oplus\mathbb{Z}_{n_{2}}\oplus\cdots\oplus\mathbb{Z}_{n_{i}}$.
Hence, the group of units of the ring $U(R)$ is called the second group of
units of $R$, written as $U^{2}(R)$. Continuing in applying the above steps,
we obtain the group of units $U^{k}(R)$ of the ring $U^{k-1}(R)$, which is
defined to be the generalized group of units of the commutative ring $R$ with
identity. Eventually, $U^{k}(R)$ shall be a commutative ring with identity.

The launching of this group opened several problems from studying the
structure of this group and determining all rings $R$ with a given
characteristic of $U^{k}\left(  R\right)  $. In particular, El-Kassar and
Chihade \cite{4} studied the decomposition of $U^{k}\left(  R\right)  $ in the
following theorem.

\begin{theorem}
\label{int 2}Let $k\geq0$. If $R\cong R_{1}\oplus R_{2}\oplus\cdots\oplus R_{r}$,
then
\[
U^{k}\left(  R\right)  \thickapprox U^{k}\left(  R_{1}\right)  \times
U^{k}\left(  R_{2}\right)  \times\cdots\times U^{k}\left(  R_{r}\right)
\]
and
\[
U^{k}\left(  R\right)  \cong U^{k}\left(  R_{1}\right)  \oplus U^{k}\left(
R_{2}\right)  \oplus\cdots\oplus U^{k}\left(  R_{r}\right)
\]
Note that the first is a group isomorphism and the second is a ring
isomorphism. So that at any step of this paper this isomorphism can represent
a group or a ring isomorphism. Also the zero group of units of $R$, $U^{0}%
(R)$, is the ring $R$ itself.
\end{theorem}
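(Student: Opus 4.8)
The plan is to prove both displays simultaneously by induction on $k$. The base case $k=0$ is immediate: by the stated convention $U^{0}(S)=S$ for every ring $S$, so the two assertions reduce to the hypothesis $R\cong R_{1}\oplus\cdots\oplus R_{r}$ itself (the ``group isomorphism'' being the underlying additive isomorphism). So the real content is the case $k=1$, which I would isolate and then iterate.

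For $k=1$ the group isomorphism $U(R)\thickapprox U(R_{1})\times\cdots\times U(R_{r})$ is the standard fact about units of a finite direct sum of rings: a tuple $(x_{1},\dots,x_{r})$ is invertible in $R_{1}\oplus\cdots\oplus R_{r}$ exactly when each $x_{i}$ is invertible in $R_{i}$, and multiplication is carried out coordinatewise, so the multiplicative group $U(R)$ is literally the product of the groups $U(R_{i})$. For the ring isomorphism one must recall how the ring structure on a group of units is manufactured: by the transport theorem quoted above (from \cite{4}), one fixes a decomposition of the finite abelian group $U(R_{i})$ into a direct sum of cyclic groups $S_{i}=\mathbb{Z}_{n_{i1}}\oplus\cdots\oplus\mathbb{Z}_{n_{it_{i}}}$ and transports the addition of $S_{i}$ along a chosen group isomorphism $U(R_{i})\to S_{i}$. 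Concatenating these decompositions gives a decomposition of $U(R)\thickapprox\prod_{i}U(R_{i})$ into cyclic groups whose associated ring is precisely $S_{1}\oplus\cdots\oplus S_{r}$, and the transported multiplication on $U(R)$ is, coordinatewise, the one making $U(R)$ isomorphic as a ring to $S_{1}\oplus\cdots\oplus S_{r}=U(R_{1})\oplus\cdots\oplus U(R_{r})$. The only thing to verify here is that a direct sum of transported ring structures equals the transport of the direct sum, which is immediate once everything is written out coordinatewise.

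With the case $k=1$ in hand the induction closes in one line: assuming the statement for $k-1$, we have the ring isomorphism $U^{k-1}(R)\cong U^{k-1}(R_{1})\oplus\cdots\oplus U^{k-1}(R_{r})$, and applying the $k=1$ case to this decomposition yields $U^{k}(R)=U\bigl(U^{k-1}(R)\bigr)\thickapprox U\bigl(U^{k-1}(R_{1})\bigr)\times\cdots\times U\bigl(U^{k-1}(R_{r})\bigr)=U^{k}(R_{1})\times\cdots\times U^{k}(R_{r})$, and the same with $\oplus$ in place of $\times$ for the ring isomorphism. The step I expect to be the main obstacle is not any computation but the bookkeeping forced by the non-canonicity of the ring structure on a group of units: the clean statement is that there exist compatible choices of the cyclic decompositions (equivalently, of the transported multiplications) for which the displayed maps are ring isomorphisms, and one must make sure that concatenating admissible decompositions of the $U^{j}(R_{i})$ produces an admissible decomposition of $U^{j}(R)$ at every level $j<k$. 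Once this compatibility is folded into the inductive hypothesis, the iteration over $k$ carries no further difficulty.
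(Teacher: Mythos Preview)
The paper does not actually prove this theorem: it is quoted from El-Kassar and Chehade \cite{4} as background in the preliminaries section, with no accompanying proof environment. So there is nothing in the paper to compare your argument against line by line.

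That said, your inductive scheme is the natural one and is correct. One remark: you flag the non-canonicity of the transported ring structure as the main obstacle and propose tracking ``compatible choices'' of cyclic decompositions through the induction. This bookkeeping is in fact unnecessary. Any decomposition of a finite abelian group $G$ into cyclic factors refines (via the Chinese Remainder Theorem applied to each factor) to the prime-power decomposition, and CRT is a \emph{ring} isomorphism $\mathbb{Z}_{mn}\cong\mathbb{Z}_{m}\oplus\mathbb{Z}_{n}$ when $\gcd(m,n)=1$. Hence every admissible cyclic decomposition of $G$ yields, up to ring isomorphism, the same ring. In particular the ring $U(R)$ is well defined up to isomorphism independently of choices, so the identity $U(R_{1}\oplus\cdots\oplus R_{r})\cong U(R_{1})\oplus\cdots\oplus U(R_{r})$ holds for \emph{any} choices on either side, and the induction closes without carrying along compatibility data.
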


One of the most important classes is the ring of integers modulo $n$,
$\mathbb{Z}_{n}$. So if $n=p_{1}^{\alpha_{1}}p_{2}^{\alpha_{2}}\cdot\cdot\cdot
p_{i}^{\alpha_{i}}$ be the decomposition of $n$ into product of distinct prime
powers. Then
\[
U^{k}\left(  \mathbb{Z}_{n}\right)  \approx U^{k}\left(  \mathbb{Z}%
_{p_{1}^{\alpha_{1}}}\right)  \times U^{k}\left(  \mathbb{Z}_{p_{2}%
^{\alpha_{2}}}\right)  \times\cdots\times U^{k}\left(  \mathbb{Z}_{p_{i}%
^{\alpha_{i}}}\right)  .
\]

An application showing how these iterated groups are determined. Let
$R=\mathbb{Z}_{338}$. We have $\mathbb{Z}_{338}\cong\mathbb{Z}_{2}%
\oplus\mathbb{Z}_{13^{2}}$. Then the first group of units is $U(\mathbb{Z}%
_{338})$, which is isomorphic to $\mathbb{Z}_{12}\times\mathbb{Z}_{13}$. Now,
$U(\mathbb{Z}_{338})$ is a ring isomorphic to $\mathbb{Z}_{12}\oplus
\mathbb{Z}_{13}$. However, the group of units of $U(\mathbb{Z}_{338})$,
$U^{2}(\mathbb{Z}_{338}),$ is the second group of units of $\mathbb{Z}_{338}$.
$U^{2}(\mathbb{Z}_{338})$ is isomorphic to $\mathbb{Z}_{2}\times\mathbb{Z}%
_{2}\times\mathbb{Z}_{12}$, which is a ring isomorphic to $\mathbb{Z}%
_{2}\oplus\mathbb{Z}_{2}\oplus\mathbb{Z}_{12}$. Continuing in the same manner,
we obtain that $U^{3}(\mathbb{Z}_{338})$ is the third group of units of
$\mathbb{Z}_{338}$ isomorphic to $\mathbb{Z}_{2}\times\mathbb{Z}_{2}$. Also
$U^{4}(\mathbb{Z}_{338})$ is the $4^{th}$ group of units isomorphic to the
trivial ring $\mathbb{Z}_{1}=\left\{  0\right\}  $.

Also, for any ring $R$ with $\left\vert R\right\vert \,>1$. Since $0\notin
U(R)$, we have $\left\vert U(R)\right\vert <\left\vert R\right\vert \,$\ and
hence $\left\vert U^{k}\left(  R\right)  \right\vert <\left\vert
U^{k-1}\left(  R\right)  \right\vert $. Thus, $U^{k}\left(  R\right)  $ must
eventually become a boolean ring and $U^{k+1}\left(  R\right)  $ is the
trivial ring. In the above example $U^{3}(\mathbb{Z}_{338})$ is a boolean ring
and $U^{4}(\mathbb{Z}_{338})$ is the trivial ring.

El-Kassar and Chihade in \cite{4} solved the problem of determining all rings
$R$, such that $U^{k}\left(  R\right)  $ is trivial completely when
$R=\mathbb{Z}_{n}$ and $k=2$ summarized in the following theorem.

\begin{theorem}
$U^{2}\left(  \mathbb{Z}_{n}\right)  $ is trivial if and only if $n\ $divisor
of $24$.
\end{theorem}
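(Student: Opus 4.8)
The plan is to reduce everything to prime powers using Theorem \ref{int 2} and then to dispose of each prime-power case by applying Lemma \ref{int 1} twice. Write $n=2^{a}p_{1}^{\alpha_{1}}\cdots p_{i}^{\alpha_{i}}$ as a product of distinct prime powers. By Theorem \ref{int 2}, $U^{2}(\mathbb{Z}_{n})\thickapprox U^{2}(\mathbb{Z}_{2^{a}})\times U^{2}(\mathbb{Z}_{p_{1}^{\alpha_{1}}})\times\cdots\times U^{2}(\mathbb{Z}_{p_{i}^{\alpha_{i}}})$, and a direct product is trivial exactly when each factor is trivial. So it suffices to determine, for a single prime power $q$, precisely when $U^{2}(\mathbb{Z}_{q})$ is trivial. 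I will repeatedly use the elementary fact that a group $U(\mathbb{Z}_{m})$ is trivial if and only if $\varphi(m)=1$, i.e. $m\in\{1,2\}$.

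First I would treat $q=2^{a}$. By Lemma \ref{int 1}, $U(\mathbb{Z}_{2^{a}})$ is $\{0\}$ for $a\le1$, is $\mathbb{Z}_{2}$ for $a=2$, and is $\mathbb{Z}_{2}\times\mathbb{Z}_{2^{a-2}}$ for $a\ge3$. Viewing this group as a ring (the direct sum of its cyclic factors, as guaranteed by the structure theorem used to define $U^{k}$) and applying $U(-)$ once more via Theorem \ref{int 2} and Lemma \ref{int 1}, one checks: for $a\le3$ the ring $U(\mathbb{Z}_{2^{a}})$ is $\{0\}$, $\mathbb{Z}_{2}$, or $\mathbb{Z}_{2}\oplus\mathbb{Z}_{2}$, each of which has trivial group of units, so $U^{2}(\mathbb{Z}_{2^{a}})$ is trivial; for $a=4$ one gets $U(\mathbb{Z}_{16})\cong\mathbb{Z}_{2}\oplus\mathbb{Z}_{4}$, whose group of units is $\mathbb{Z}_{2}$, hence nontrivial; and for $a\ge5$ the summand $\mathbb{Z}_{2^{a-2}}$ with $a-2\ge3$ already contributes a nontrivial group of units. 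Thus the $2$-part of $n$ imposes exactly the constraint $2^{a}\mid 8$.

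Next I would treat $q=p^{\alpha}$ with $p$ an odd prime. By Lemma \ref{int 1}, $U(\mathbb{Z}_{p^{\alpha}})\cong\mathbb{Z}_{p-1}\oplus\mathbb{Z}_{p^{\alpha-1}}$ as a ring, so by Theorem \ref{int 2}, $U^{2}(\mathbb{Z}_{p^{\alpha}})\thickapprox U(\mathbb{Z}_{p-1})\times U(\mathbb{Z}_{p^{\alpha-1}})$. This is trivial if and only if both $U(\mathbb{Z}_{p-1})$ and $U(\mathbb{Z}_{p^{\alpha-1}})$ are trivial, i.e. $p-1\in\{1,2\}$ and $p^{\alpha-1}\in\{1,2\}$. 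Since $p$ is odd, $p^{\alpha-1}=2$ is impossible, forcing $\alpha=1$, and $p-1\in\{1,2\}$ forces $p=3$. Hence every odd prime-power factor of $n$ must equal $3$. Combining this with the $2$-part constraint, $U^{2}(\mathbb{Z}_{n})$ is trivial if and only if $n=2^{a}3^{b}$ with $a\le3$ and $b\le1$, which is precisely the condition $n\mid 24$.

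There is no genuinely hard step here; the one place that needs care is the bookkeeping of the small $2$-power cases. In particular $U(\mathbb{Z}_{4})\approx\mathbb{Z}_{2}$ is a nontrivial ring whose own group of units is trivial, so $U^{2}(\mathbb{Z}_{4})$ is trivial while $U^{2}(\mathbb{Z}_{16})$ is not; pinning down the exact cutoff at $a=3$ versus $a=4$ is the only point where a miscount would spoil the clean answer ``$n\mid 24$.''
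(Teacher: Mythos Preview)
Your argument is correct. Note, however, that the paper does not actually prove this theorem: it is quoted from El-Kassar and Chehade \cite{4} as background, so there is no ``paper's own proof'' to compare against line by line. That said, the ingredients you use are exactly the ones the paper lays out in Section~2 just before stating the theorem: the reduction to prime powers via Theorem~\ref{int 2}, and the explicit description of $U(\mathbb{Z}_{2^{a}})$ and $U(\mathbb{Z}_{p^{\alpha}})$ from Lemma~\ref{int 1}. The paper even remarks there that $U(\mathbb{Z}_{n})$ is trivial iff $n\in\{1,2\}$ and is boolean precisely for the $2^{a}3^{b}$ cases with $a\le 3$, $b\le 1$; since $U^{2}(\mathbb{Z}_{n})$ is trivial exactly when $U(\mathbb{Z}_{n})$ is boolean or trivial, this is your computation in slightly different packaging. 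Equivalently, the paper's later general-$k$ results (Theorem~\ref{b4} for $p=2$, and the odd-prime analysis culminating in Theorem~\ref{r4} and Theorem~\ref{d2}) specialize at $k=2$ to the same case split you carry out by hand.
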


Also Kadri and El-Kassar in \cite{5}, solved the problem for $U^{3}\left(
\mathbb{Z}_{n}\right)  $ given in the following theorem

\begin{theorem}
$U^{3}\left(  \mathbb{Z}_{n}\right)  $ is trivial if and only if $n\ $divisor
of $\ 131040.$
\end{theorem}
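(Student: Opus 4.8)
The plan is to reduce everything to prime powers and then invoke the already known description of the $n$ with $U^{2}(\mathbb{Z}_{n})$ trivial. Write $n=2^{a}p_{1}^{\alpha_{1}}\cdots p_{r}^{\alpha_{r}}$ with $p_{1},\dots,p_{r}$ distinct odd primes. From $\mathbb{Z}_{n}\cong\mathbb{Z}_{2^{a}}\oplus\bigoplus_{i}\mathbb{Z}_{p_{i}^{\alpha_{i}}}$ and Theorem \ref{int 2} we get $U^{3}(\mathbb{Z}_{n})\approx U^{3}(\mathbb{Z}_{2^{a}})\times\prod_{i}U^{3}(\mathbb{Z}_{p_{i}^{\alpha_{i}}})$, so $U^{3}(\mathbb{Z}_{n})$ is trivial exactly when every factor on the right is trivial. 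Hence the problem comes down to deciding, for each prime power $q$, whether $U^{3}(\mathbb{Z}_{q})$ is trivial, and then assembling the answer.

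For an odd prime $p$: by Lemma \ref{int 1}(3), $U(\mathbb{Z}_{p^{\alpha}})\approx\mathbb{Z}_{p-1}\times\mathbb{Z}_{p^{\alpha-1}}$, which is cyclic of order $(p-1)p^{\alpha-1}$ since $\gcd(p-1,p)=1$; consequently $U(\mathbb{Z}_{p^{\alpha}})\cong\mathbb{Z}_{(p-1)p^{\alpha-1}}$ as a ring. Applying $U^{2}$ to this ring isomorphism gives $U^{3}(\mathbb{Z}_{p^{\alpha}})=U^{2}\!\bigl(U(\mathbb{Z}_{p^{\alpha}})\bigr)\cong U^{2}(\mathbb{Z}_{(p-1)p^{\alpha-1}})$, which by the theorem of El-Kassar and Chehade \cite{4} is trivial if and only if $(p-1)p^{\alpha-1}\mid 24$. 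Since $24=2^{3}\cdot 3$ and $\gcd(p-1,p^{\alpha-1})=1$, this divisibility holds precisely when either $\alpha=1$ and $p-1\mid 24$ (forcing $p\in\{3,5,7,13\}$), or $\alpha=2$ and $p=3$. So the odd prime powers $q$ with $U^{3}(\mathbb{Z}_{q})$ trivial are exactly $3,\ 3^{2},\ 5,\ 7,\ 13$.

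For $p=2$ the group $U(\mathbb{Z}_{2^{a}})$ fails to be cyclic for $a\ge 3$, so I argue directly. Lemma \ref{int 1}(2) gives the ring isomorphism $U(\mathbb{Z}_{2^{a}})\cong\mathbb{Z}_{2}\oplus\mathbb{Z}_{2^{a-2}}$ for $a\ge 2$, whence Theorem \ref{int 2} yields $U^{3}(\mathbb{Z}_{2^{a}})\approx U^{2}(\mathbb{Z}_{2})\times U^{2}(\mathbb{Z}_{2^{a-2}})$; since $U(\mathbb{Z}_{2})=\{0\}$ the first factor is trivial, so $U^{3}(\mathbb{Z}_{2^{a}})\approx U^{2}(\mathbb{Z}_{2^{a-2}})$, which is trivial if and only if $2^{a-2}\mid 24$, i.e. $a\le 5$ (the cases $a\le 1$ being immediate). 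Thus the powers of $2$ giving a trivial $U^{3}$ are $1,2,2^{2},2^{3},2^{4},2^{5}$. Combining the two cases, $U^{3}(\mathbb{Z}_{n})$ is trivial if and only if $v_{2}(n)\le 5$, $v_{3}(n)\le 2$, $v_{5}(n),v_{7}(n),v_{13}(n)\le 1$, and no other prime divides $n$; equivalently, if and only if $n\mid 2^{5}\cdot 3^{2}\cdot 5\cdot 7\cdot 13=131040$.

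The step I expect to require the most care is the odd case: one must make sure the isomorphisms invoked are ring isomorphisms (so that Theorem \ref{int 2} and the $U^{2}$-classification genuinely apply), must solve $(p-1)p^{\alpha-1}\mid 24$ correctly, and must check the small-$a$ base cases for $p=2$ where $\mathbb{Z}_{2^{a-2}}$ degenerates to the zero ring; once those are pinned down, the remainder is bookkeeping with the direct-product decomposition.
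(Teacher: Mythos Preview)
Your argument is correct. Note, however, that the paper does not actually prove this theorem: it appears in the preliminaries section as a result quoted from Kadri and El-Kassar \cite{5}, with no proof supplied here, so there is no in-paper proof to compare against line by line.

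That said, your route---reducing $U^{3}(\mathbb{Z}_{q})=U^{2}\bigl(U(\mathbb{Z}_{q})\bigr)$ to the known $U^{2}$ classification of Theorem~3---is precisely the natural one and matches the spirit of \cite{5}. The one shortcut worth flagging is that for odd $p$ you collapse $\mathbb{Z}_{p-1}\times\mathbb{Z}_{p^{\alpha-1}}$ to the single cyclic ring $\mathbb{Z}_{(p-1)p^{\alpha-1}}$ (legitimate by CRT, since $\gcd(p-1,p)=1$), turning the question into the clean divisibility $(p-1)p^{\alpha-1}\mid 24$. The structural formula the paper quotes from \cite{5} keeps the factors separate, e.g.\ $U^{3}(\mathbb{Z}_{p^{\alpha}})\approx U^{2}(\mathbb{Z}_{p-1})\times U(\mathbb{Z}_{p-1})\times\mathbb{Z}_{p-1}\times\mathbb{Z}_{p^{\alpha-3}}$ for $\alpha\ge 3$, and one would then have to argue each factor trivial individually; your version is a bit more economical. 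The paper's own later machinery (Theorem~\ref{b4} for $p=2$ and Theorem~\ref{d2} for odd $p$) gives the same conclusion for general $k$ and specializes to exactly your answer at $k=3$.
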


Moreover, they established a structure of $U^{3}\left(  \mathbb{Z}_{n}\right)
$ as
\[
U^{3}\left(  \mathbb{Z}_{2^{a}}\right)  \approx\left\{
\begin{tabular}
[c]{ll}%
$\left\{  0\right\}  $ & if $a<6$\\
$\mathbb{Z}_{2}\times\mathbb{Z}_{2^{a-6}}$ & if $a\geq6$%
\end{tabular}
\right.
\]
and when $p$ is an odd prime. Then
\[
U^{3}\left(  \mathbb{Z}_{p^{a}}\right)  \approx\left\{
\begin{tabular}
[c]{ll}%
$U^{2}(\mathbb{Z}_{p-1})$ & if $\alpha=1$\\
$U^{2}(\mathbb{Z}_{p-1})\times U(\mathbb{Z}_{p-1})$ & if $\alpha=2$\\
$U^{2}(\mathbb{Z}_{p-1})\times U(\mathbb{Z}_{p-1})\times\mathbb{Z}_{p-1}%
\times\mathbb{Z}_{p^{\alpha-3}}$ & if $\alpha\geq3$%
\end{tabular}
\right.
\]

\section{The Decomposition of $K^{th}$ Group Of Units Of $Z_{n}$}

In this section we determine the structure of the $k^{th}$ group of units of
$\mathbb{Z}_{n}$. First we consider the case when $n=$ $2^{\alpha}$ and then
the case when $n=p^{\alpha},$ where p is an odd prime.

\begin{lemma}
\label{a1}$U^{k}\left(  \mathbb{Z}_{2}\right)  \approx\left\{  0\right\}  $
for all $k\geq1$ and $U^{k}\left(  \mathbb{Z}_{4}\right)  \approx\left\{
0\right\}  $ for all $k\geq2$.
\end{lemma}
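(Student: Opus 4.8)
The plan is to reduce everything to the base case $U(\mathbb{Z}_2)\approx\{0\}$ and the two-step computation $U(\mathbb{Z}_4)\approx\mathbb{Z}_2\approx\mathbb{Z}_2$, then iterate. First I would recall from Lemma \ref{int 1} part (1) that $U(\mathbb{Z}_2)\approx\{0\}$. The ring $U(\mathbb{Z}_2)$ is the trivial ring, i.e. the zero ring $\{0\}$, and the group of units of the zero ring is again $\{0\}$ (the unique element $0=1$ is a unit). Hence $U^2(\mathbb{Z}_2)=U(U(\mathbb{Z}_2))\approx U(\{0\})\approx\{0\}$, and an immediate induction on $k$ gives $U^k(\mathbb{Z}_2)\approx\{0\}$ for all $k\geq 1$.

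For the second assertion, I would start from Lemma \ref{int 1} part (2) with $a=2$, which gives $U(\mathbb{Z}_4)\approx\mathbb{Z}_2\times\mathbb{Z}_{2^{0}}=\mathbb{Z}_2\times\{0\}\approx\mathbb{Z}_2$. By the ring-isomorphism refinement (the theorem quoted before Theorem \ref{int 2}, together with Theorem \ref{int 2}), $U(\mathbb{Z}_4)$ is in fact ring-isomorphic to $\mathbb{Z}_2$. Therefore $U^2(\mathbb{Z}_4)=U(U(\mathbb{Z}_4))\approx U(\mathbb{Z}_2)\approx\{0\}$. Now I am back in the situation of the first part: $U^2(\mathbb{Z}_4)$ is the trivial ring, so $U^{k}(\mathbb{Z}_4)=U^{k-2}(U^2(\mathbb{Z}_4))\approx U^{k-2}(\{0\})\approx\{0\}$ for every $k\geq 2$, again by the trivial induction that the group of units of the zero ring is the zero ring.

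The only genuinely delicate point — and the main thing I would make sure to state cleanly — is the convention that $U(\{0\})\approx\{0\}$, i.e. that once an iterate hits the trivial ring it stays trivial; this is exactly the mechanism behind the ``eventually boolean / eventually trivial'' discussion in the introduction, and it makes both inductions completely routine. I would also note in passing that $U^k(\mathbb{Z}_4)\approx\{0\}$ for $k\geq 2$ already follows from the $n=338$ style computation, but the direct argument above via $U(\mathbb{Z}_4)\approx\mathbb{Z}_2$ and the first part is cleaner and self-contained. No further case analysis is needed, so the proof is short.
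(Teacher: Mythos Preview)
Your proof is correct and follows essentially the same route as the paper: use $U(\mathbb{Z}_2)\approx\{0\}$ from Lemma~\ref{int 1} and iterate, then use $U(\mathbb{Z}_4)\approx\mathbb{Z}_2$ and reduce to the first part. The only cosmetic difference is that the paper writes the second reduction as $U^{k}(\mathbb{Z}_4)\approx U^{k-1}(\mathbb{Z}_2)$ directly, whereas you pass through $U^{2}(\mathbb{Z}_4)\approx\{0\}$ first; both are equivalent.
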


\begin{proof}
Let $k=1$. We have from Lemma \ref{int 1}, $U\left(  \mathbb{Z}_{2}\right)
\approx\left\{  0\right\}  $. Now, let $k>1$. We obtain $U^{k-1}\left(
U\left(  \mathbb{Z}_{2}\right)  \right)  \approx U^{k-1}(\left\{  0\right\}
)$ which gives that $U^{k}\left(  \mathbb{Z}_{2}\right)  \approx\left\{
0\right\}  $. Therefore, $U^{k}\left(  \mathbb{Z}_{2}\right)  \approx\left\{
0\right\}  $ for all $k\geq1$. Now, by Lemma \ref{int 1}, $U\left(
\mathbb{Z}_{4}\right)  \approx\mathbb{Z}_{2}$, and thus $U^{k-1}\left(
U\left(  \mathbb{Z}_{4}\right)  \right)  \approx U^{k-1}(\mathbb{Z}_{2}).$
However, from the previous result $U^{k-1}(\mathbb{Z}_{2})\approx\left\{
0\right\}  $ for $k-1\geq1,$ $k\geq2$. Therefore, $U^{k}\left(  \mathbb{Z}%
_{4}\right)  \approx\left\{  0\right\}  $ for all $k\geq2$.
\end{proof}

\begin{lemma}
\label{b3}Let $\alpha>2t\geq0$ and $k>t\geq0$. Then $U^{k}\left(
\mathbb{Z}_{2^{\alpha}}\right)  \approx U^{k-t}(\mathbb{Z}_{2^{\alpha-2t}}).$
\end{lemma}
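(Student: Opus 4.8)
The plan is to prove the statement by induction on $t$. The base case $t=0$ is trivial since then the claimed isomorphism reads $U^{k}(\mathbb{Z}_{2^{\alpha}}) \approx U^{k}(\mathbb{Z}_{2^{\alpha}})$, which holds vacuously. For the inductive step, it suffices to establish the single-step reduction: if $\alpha > 2$ and $k \geq 1$, then $U^{k}(\mathbb{Z}_{2^{\alpha}}) \approx U^{k-1}(\mathbb{Z}_{2^{\alpha-2}})$; applying this once peels off one level and then the induction hypothesis (with $t-1$ in place of $t$, $\alpha - 2$ in place of $\alpha$, and $k-1$ in place of $k$, noting the hypotheses $\alpha - 2 > 2(t-1)$ and $k-1 > t-1$ are exactly what we need) finishes the argument.

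So the real content is the single-step reduction. First I would invoke Lemma \ref{int 1}(2): since $\alpha \geq 2$ we have $U(\mathbb{Z}_{2^{\alpha}}) \approx \mathbb{Z}_{2} \times \mathbb{Z}_{2^{\alpha-2}}$. Then $U^{k}(\mathbb{Z}_{2^{\alpha}}) = U^{k-1}(U(\mathbb{Z}_{2^{\alpha}})) \approx U^{k-1}(\mathbb{Z}_{2} \times \mathbb{Z}_{2^{\alpha-2}})$, and by Theorem \ref{int 2} this splits as $U^{k-1}(\mathbb{Z}_{2}) \times U^{k-1}(\mathbb{Z}_{2^{\alpha-2}})$. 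Now I would split into cases according to whether $\alpha - 2 \geq 2$ or $\alpha - 2 = 1$ (the constraint $\alpha > 2$ combined with $\alpha$ an integer gives $\alpha \geq 3$, so $\alpha - 2 \geq 1$). If $\alpha - 2 \geq 2$, then $U^{k-1}(\mathbb{Z}_{2^{\alpha-2}})$ is (by Lemma \ref{int 1}(2) again) a group with even order divisible by $\mathbb{Z}_2$ at the first level, and in any case $U^{k-1}(\mathbb{Z}_2) \approx \{0\}$ for $k - 1 \geq 1$ by Lemma \ref{a1}, so the trivial factor drops out leaving exactly $U^{k-1}(\mathbb{Z}_{2^{\alpha-2}})$. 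If $\alpha - 2 = 1$, i.e. $\alpha = 3$, then we need $U^{k-1}(\mathbb{Z}_2) \times U^{k-1}(\mathbb{Z}_2) \approx U^{k-1}(\mathbb{Z}_2)$, which again holds because both sides are trivial for $k - 1 \geq 1$; but here I should double-check the hypotheses still permit $t$ to be large enough — with $\alpha = 3$ the constraint $\alpha > 2t$ forces $t = 1$, and then $k > 1$ means $k - 1 \geq 1$, so Lemma \ref{a1} applies.

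The one subtlety worth being careful about is the boundary case $k - 1 = 0$: if $k = 1$ then $k > t \geq 0$ forces $t = 0$, which is the trivial base case already handled, so in the inductive step we always have $k \geq 2$ and hence $k - 1 \geq 1$, legitimizing every appeal to Lemma \ref{a1}. I expect this bookkeeping with the inequalities — making sure that at each peeled level the hypotheses $\alpha - 2j > 2(t-j)$ and $k - j > t - j$ remain valid and that $k - j \geq 1$ whenever we need to kill a $\mathbb{Z}_2$ factor — is the only place where care is required; the algebraic manipulations themselves are immediate from Lemmas \ref{int 1} and \ref{a1} and Theorem \ref{int 2}. An alternative, perhaps cleaner, organization is to prove directly by induction on $t$ that $U^{t}(\mathbb{Z}_{2^{\alpha}}) \approx \mathbb{Z}_{2^{\alpha - 2t}}$ (not just $\{0\}$ but the full cyclic group, valid while $\alpha > 2t$, using that the spurious $\mathbb{Z}_2$ summands contributed at each stage are annihilated by one further application of $U$), and then apply $U^{k-t}$ to both sides; I would likely present whichever of these two routes produces the shorter writeup.
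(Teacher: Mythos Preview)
Your approach is essentially the same as the paper's: establish the single-step reduction $U^{k}(\mathbb{Z}_{2^{\alpha}}) \approx U^{k-1}(\mathbb{Z}_{2^{\alpha-2}})$ via Lemma~\ref{int 1}, Theorem~\ref{int 2}, and Lemma~\ref{a1}, then iterate $t$ times. The paper states this more tersely (``Applying the above relation $t$ times, we obtain the result''), while you frame it as an induction and add careful bookkeeping at the boundaries; your case split on $\alpha-2\geq 2$ versus $\alpha-2=1$ is unnecessary, since the only fact needed to drop the extra factor is $U^{k-1}(\mathbb{Z}_2)\approx\{0\}$ for $k-1\geq 1$, independent of $\alpha$.
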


\begin{proof}
Suppose that $\alpha>2$ and $k>1.$ By Lemma \ref{int 1}, we have $U\left(
\mathbb{Z}_{2^{\alpha}}\right)  \approx\mathbb{Z}_{2}\times\mathbb{Z}%
_{2^{\alpha-2}}$, then $U^{k-1}\left(  U\left(  \mathbb{Z}_{2^{\alpha}%
}\right)  \right)  \approx U^{k-1}\left(  \mathbb{Z}_{2}\mathbb{\times
Z}_{2^{\alpha-2}}\right)  $. However, $U^{k-1}\left(  \mathbb{Z}%
_{2}\mathbb{\times Z}_{2^{\alpha-2}}\right)  \approx U^{k-1}\left(
\mathbb{Z}_{2})\times U^{k-1}(\mathbb{Z}_{2^{\alpha-2}}\right)  $, and by
Lemma \ref{a1}, $U^{k-1}\left(  \mathbb{Z}_{2}\right)  \approx\left\{
0\right\}  $. Therefore, $U^{k}\left(  \mathbb{Z}_{2^{\alpha}}\right)  \approx
U^{k-1}(\mathbb{Z}_{2^{\alpha-2}})$.

Applying the above relation $t$ times, we obtain the result.
\end{proof}

\begin{lemma}
\label{b10}$U^{k}\left(  \mathbb{Z}_{2^{\alpha}}\right)  \approx
U^{k+t}\left(  \mathbb{Z}_{2^{\alpha+2t}}\right)  $\ for all nonzero natural
numbers $k$ and $\alpha$, where $t\geq0$.
\end{lemma}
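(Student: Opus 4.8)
The plan is to recognize this statement as essentially a reindexing of Lemma \ref{b3}. If $t=0$ there is nothing to prove, so assume $t\geq 1$. In Lemma \ref{b3} I would replace the exponent $\alpha$ by $\alpha+2t$ and the level $k$ by $k+t$, keeping the same $t$. First I would check that the hypotheses of Lemma \ref{b3} survive this substitution: one needs $\alpha+2t>2t\geq 0$, which holds because $\alpha$ is a nonzero natural number, so $\alpha\geq 1$; and one needs $k+t>t\geq 0$, which holds because $k\geq 1$. With the hypotheses verified, Lemma \ref{b3} yields
\[
U^{k+t}\left(\mathbb{Z}_{2^{\alpha+2t}}\right)\approx U^{(k+t)-t}\left(\mathbb{Z}_{2^{(\alpha+2t)-2t}}\right)=U^{k}\left(\mathbb{Z}_{2^{\alpha}}\right),
\]
which is exactly the claimed isomorphism.

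Alternatively, I would phrase the argument as an induction on $t$: the base case $t=0$ is trivial, and for the inductive step it suffices to establish the one-step relation $U^{k}(\mathbb{Z}_{2^{\alpha}})\approx U^{k+1}(\mathbb{Z}_{2^{\alpha+2}})$, obtained from Lemma \ref{b3} with $\alpha\mapsto\alpha+2$, $k\mapsto k+1$ and $t=1$ (whose hypotheses $\alpha+2>2$ and $k+1>1$ again follow from $\alpha,k\geq 1$); one then composes this isomorphism $t$ times.

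The only point requiring any care — and it is the closest thing to an obstacle here — is making sure the \emph{strict} inequalities in the hypotheses of Lemma \ref{b3} are still satisfied after the substitution; this is precisely why the statement is restricted to nonzero $\alpha$ and $k$. No genuine new computation is needed: the result is a bookkeeping consequence of Lemma \ref{b3}, and it is worth recording separately only because it runs the recursion of Lemma \ref{b3} in the "upward" direction, which is the form used later in the paper.
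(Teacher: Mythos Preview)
Your proposal is correct and follows essentially the same idea as the paper's proof: both reduce to the one-step relation $U^{k+1}(\mathbb{Z}_{2^{\alpha+2}})\approx U^{k}(\mathbb{Z}_{2^{\alpha}})$ and iterate. The only cosmetic difference is that the paper re-derives this one-step relation directly from Lemma~\ref{int 1} and Lemma~\ref{a1}, whereas you (more economically) invoke Lemma~\ref{b3}, which already packages that computation.
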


\begin{proof}
Suppose that $k>0$ and $\alpha>0.$ Then by Lemma \ref{int 1}, $U\left(
\mathbb{Z}_{2^{\alpha+2}}\right)  \approx\mathbb{Z}_{2}\times\mathbb{Z}%
_{2^{\alpha}}$ and $U^{k+1}\left(  \mathbb{Z}_{2^{\alpha+2}}\right)  \approx
U^{k}\left(  \mathbb{Z}_{2}\right)  \times U^{k}(\mathbb{Z}_{2^{\alpha}})$.
But by Lemma \ref{a1}, $U^{k}\left(  \mathbb{Z}_{2}\right)  \approx\left\{
0\right\}  $. Hence, $U^{k+1}\left(  \mathbb{Z}_{2^{\alpha+2}}\right)  \approx
U^{k}\left(  \mathbb{Z}_{2^{\alpha}}\right)  .$

Applying the above relation $t$ times, we obtain the result.
\end{proof}

In the following theorem we give the decomposition of $U^{k}\left(
\mathbb{Z}_{2^{\alpha}}\right)  $ into a direct product of $\mathbb{Z}_{n}$'s.

\begin{theorem}
\label{b4}Let $k>0$ and $\alpha>0$. Then the decomposition of $U^{k}\left(
\mathbb{Z}_{2^{\alpha}}\right)  $ is given by
\end{theorem}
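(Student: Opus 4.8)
The plan is to reduce every case to one of the small base cases $\mathbb{Z}_2$, $\mathbb{Z}_4$, or $\mathbb{Z}_{2^2}$ — all already settled by Lemmas \ref{a1} and \ref{int 1} — by peeling off powers of $2$ with Lemma \ref{b3}. The expected form of the answer is
\[
U^k\!\left(\mathbb{Z}_{2^\alpha}\right) \approx
\begin{cases}
\{0\} & \text{if } \alpha \le 2k-1,\\
\mathbb{Z}_2 \times \mathbb{Z}_{2^{\alpha - 2k}} & \text{if } \alpha \ge 2k,
\end{cases}
\]
with the usual convention $\mathbb{Z}_{2^0}=\{0\}$ (so that at the boundary $\alpha=2k$ the decomposition collapses to $\mathbb{Z}_2$); this specializes correctly to the $k=1$ case of Lemma \ref{int 1} and to the $k=2,3$ formulas recorded in Section~2.

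First I would dispose of the range $\alpha \ge 2k$. Here I apply Lemma \ref{b3} with $t=k-1$: the two hypotheses $\alpha > 2t = 2k-2$ and $k > t = k-1$ both hold, so $U^k(\mathbb{Z}_{2^\alpha}) \approx U^{1}(\mathbb{Z}_{2^{\alpha-2k+2}})$. Since $\alpha-2k+2 \ge 2$, Lemma \ref{int 1}(2) gives $U(\mathbb{Z}_{2^{\alpha-2k+2}}) \approx \mathbb{Z}_2 \times \mathbb{Z}_{2^{\alpha-2k}}$, which is the claimed decomposition.

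Next I would handle $\alpha \le 2k-1$, splitting on the parity of $\alpha$. If $\alpha = 2m+1$ is odd, then $m \le k-1$, and applying Lemma \ref{b3} with $t=m$ (the conditions $2m+1 > 2m$ and $k > m$ are met) reduces $U^k(\mathbb{Z}_{2^\alpha})$ to $U^{k-m}(\mathbb{Z}_2)$, which is trivial by Lemma \ref{a1} because $k-m\ge 1$. If $\alpha = 2m$ is even, then again $m \le k-1$ and $m\ge 1$ since $\alpha>0$; applying Lemma \ref{b3} with $t=m-1$ (the conditions $2m > 2m-2$ and $k > m-1$ are met) reduces $U^k(\mathbb{Z}_{2^\alpha})$ to $U^{k-m+1}(\mathbb{Z}_{2^2})$, which is trivial by Lemma \ref{a1} because $k-m+1 \ge 2$. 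In the degenerate subcases $t=0$ (namely $\alpha=1$ or $\alpha=2$) Lemma \ref{b3} is vacuous and one simply quotes Lemma \ref{a1} directly.

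There is no real obstacle here; the only care needed is, in each case, to verify that the two index inequalities $\alpha>2t$ and $k>t$ demanded by Lemma \ref{b3} hold simultaneously, and to keep track of the residual exponent so the surviving cyclic factor $\mathbb{Z}_{2^{\alpha-2k}}$ comes out right. One could instead package the even/odd split uniformly by writing $\alpha = 2\lfloor\alpha/2\rfloor+\epsilon$ and choosing $t$ accordingly, but the explicit case analysis above is the most transparent route, and it also makes it immediate where the group first becomes nontrivial (exactly at $\alpha=2k$), which is what is needed later for the boolean/trivial discussion.
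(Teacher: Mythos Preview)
Your proposal is correct and follows essentially the same approach as the paper's own proof: in every case you apply Lemma~\ref{b3} with the same choice of $t$ (namely $t=k-1$ for $\alpha\ge 2k$, $t=(\alpha-1)/2$ for $\alpha$ odd, and $t=(\alpha-2)/2$ for $\alpha$ even) and then invoke Lemma~\ref{int 1} or Lemma~\ref{a1} on the resulting base case. The only cosmetic difference is that you merge the cases $\alpha>2k$ and $\alpha=2k$ via the convention $\mathbb{Z}_{2^0}=\{0\}$, whereas the paper keeps them separate.
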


\begin{enumerate}
\item $U^{k}\left(  \mathbb{Z}_{2^{\alpha}}\right)  \approx\mathbb{Z}%
_{2}\times\mathbb{Z}_{2^{\alpha-2k}}$ if $\alpha>2k$,

\item $U^{k}\left(  \mathbb{Z}_{2^{\alpha}}\right)  \approx\mathbb{Z}_{2}$ if
$\alpha=2k$,

\item $U^{k}\left(  \mathbb{Z}_{2^{\alpha}}\right)  \approx\left\{  0\right\}
$ if $\alpha<2k$.
\end{enumerate}

Note that for $\alpha=2k+1$, $U^{k}\left(  \mathbb{Z}_{2^{2k+1}}\right)
\approx\mathbb{Z}_{2}\times\mathbb{Z}_{2}$ which is a boolean ring. Also (3)
can be written as: $U^{k}\left(  \mathbb{Z}_{2^{\alpha}}\right)
\approx\left\{  0\right\}  $ if and only $2^{\alpha}$ is a divisor of
$2^{2k-1}$.

\begin{proof}
Let $k>0$ and $\alpha>0.$

\begin{enumerate}
\item Suppose $\alpha>2k$ and $t=k-1$. Then $\alpha-2\left(  k-1\right)
=\alpha-2t>2$ and $k>t$. Now, from Lemma \ref{b3}, we have $U^{k}\left(
\mathbb{Z}_{2^{\alpha}}\right)  \approx U^{k-t}(\mathbb{Z}_{2^{\alpha-2t}})$.
Hence,
\begin{equation}
U^{k}\left(  \mathbb{Z}_{2^{\alpha}}\right)  \approx U^{k-\left(  k~-1\right)
}(\mathbb{Z}_{2^{\alpha-2\left(  k-1\right)  }})\text{ }=U(\mathbb{Z}%
_{2^{\alpha-2\left(  k-1\right)  }}) \label{eq 2}%
\end{equation}
and by Lemma \ref{int 1}, $U(\mathbb{Z}_{2^{\alpha-2\left(  k-1\right)  }%
})\approx\mathbb{Z}_{2}\times\mathbb{Z}_{2^{\alpha-2k}}$. Therefore,
$U^{k}\left(  \mathbb{Z}_{2^{\alpha}}\right)  \approx\mathbb{Z}_{2}%
\times\mathbb{Z}_{2^{\alpha-2k}}.$

\item Suppose that $\alpha=2k$ and $t=k-1$. Since $k>t$ and $\alpha
-2t=\alpha-2k+2=2>0$, we have from Lemma \ref{b3}, the same formula of
(\ref{eq 2}). But $U(\mathbb{Z}_{2^{\alpha-2\left(  k-1\right)  }%
})=U(\mathbb{Z}_{4})\approx\mathbb{Z}_{2}$. Therefore, $U^{k}\left(
\mathbb{Z}_{2^{\alpha}}\right)  \approx\mathbb{Z}_{2}$.

\item Suppose $\alpha<2k$. In the case $\alpha$ is odd, set $t=\frac{\alpha
-1}{2}$. Since $\alpha-2t=\alpha-2\left(  \frac{\alpha-1}{2}\right)  =1>0$ and
$k-t=k-\frac{\alpha-1}{2}=\frac{2k-\alpha+1}{2}>0$, Lemma \ref{b3} gives that
$U^{k}\left(  \mathbb{Z}_{2^{\alpha}}\right)  \approx U^{k-t}(\mathbb{Z}%
_{2^{1}})$ and $U^{k-t}(\mathbb{Z}_{2^{1}})$ is the trivial group. Now, the
case when $\alpha$ is even, set $t=\frac{\alpha-2}{2}$. Since $\alpha
-2t=\alpha-2\left(  \frac{\alpha-2}{2}\right)  =2>0$ and $k-t=k-\frac
{\alpha-2}{2}=\frac{2k-\alpha+2}{2}>1$. From Lemma \ref{b3}, $U^{k}\left(
\mathbb{Z}_{2^{\alpha}}\right)  \approx U^{k-t}(\mathbb{Z}_{2^{2}})$ which is
also the trivial group by Lemma \ref{a1}. Therefore, $U^{k}\left(
\mathbb{Z}_{2^{\alpha}}\right)  \approx\left\{  0\right\}  $ if $\alpha<2k$.
\end{enumerate}
\end{proof}

Next, we study the decomposition of $k^{th}$ group of units of the ring
$\mathbb{Z}_{p^{\alpha}}$,\ when $p$ is an odd prime.

\begin{lemma}
\label{c2}Let $p$ be an odd prime and let $k\geq1.$ Then $U^{k}\left(
\mathbb{Z}_{p}\right)  \approx U^{k-1}\left(  \mathbb{Z}_{p-1}\right)  $.
\end{lemma}

\begin{proof}
The proof is a direct consequence that $U(\mathbb{Z}_{p})\approx
\mathbb{Z}_{p-1}$.
\end{proof}

\begin{theorem}
\label{c3}Let $p$ be an odd prime and let $0\leq t<\alpha$ and $1\leq t\leq
k$. Then
\[
U^{k}\left(  \mathbb{Z}_{p^{\alpha}}\right)  \approx U^{k}\left(
\mathbb{Z}_{p}\right)  \times U^{k-1}\left(  \mathbb{Z}_{p}\right)
\times\cdots\times U^{k-t+1}\left(  \mathbb{Z}_{p}\right)  \times U^{k-t}\left(
\mathbb{Z}_{p^{\alpha-t}}\right)  .
\]

\end{theorem}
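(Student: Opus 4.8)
The plan is to prove this by induction on $t$, peeling off one factor of $p$ at each step using the structure of $U(\mathbb{Z}_{p^{\alpha}})$ from Lemma~\ref{int 1} together with the distributive behavior of $U^{k}$ over direct sums (Theorem~\ref{int 2}). First I would establish the base case $t=1$: by Lemma~\ref{int 1}(3), $U(\mathbb{Z}_{p^{\alpha}}) \approx \mathbb{Z}_{p-1} \times \mathbb{Z}_{p^{\alpha-1}}$, so applying $U^{k-1}$ and using Theorem~\ref{int 2} gives
\[
U^{k}\left(\mathbb{Z}_{p^{\alpha}}\right) \approx U^{k-1}\left(\mathbb{Z}_{p-1}\right) \times U^{k-1}\left(\mathbb{Z}_{p^{\alpha-1}}\right).
\]
Now $U^{k-1}(\mathbb{Z}_{p-1}) \approx U^{k-1}(U(\mathbb{Z}_p)) = U^{k}(\mathbb{Z}_p)$, which is exactly the first factor $U^{k}(\mathbb{Z}_p)$ in the desired formula, and the second factor $U^{k-1}(\mathbb{Z}_{p^{\alpha-1}})$ matches $U^{k-t}(\mathbb{Z}_{p^{\alpha-t}})$ with $t=1$. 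This handles $t=1$ (note the hypotheses $0 \le t < \alpha$ and $1 \le t \le k$ ensure $\alpha - 1 \ge 1$ and $k - 1 \ge 0$, so all expressions make sense).

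For the inductive step, suppose the formula holds for some $t$ with $1 \le t < \alpha$, $t < k$ (the constraints on $t+1$ force these), so that
\[
U^{k}\left(\mathbb{Z}_{p^{\alpha}}\right) \approx U^{k}\left(\mathbb{Z}_{p}\right) \times \cdots \times U^{k-t+1}\left(\mathbb{Z}_{p}\right) \times U^{k-t}\left(\mathbb{Z}_{p^{\alpha-t}}\right).
\]
I would then apply the $t=1$ case (equivalently Lemma~\ref{int 1}(3) plus Theorems~\ref{int 2} and Lemma~\ref{c2}) to the last factor $U^{k-t}(\mathbb{Z}_{p^{\alpha-t}})$, since $\alpha - t \ge 2$ and $k - t \ge 1$ under the standing hypotheses for $t+1$. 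This yields
\[
U^{k-t}\left(\mathbb{Z}_{p^{\alpha-t}}\right) \approx U^{k-t}\left(\mathbb{Z}_{p}\right) \times U^{k-t-1}\left(\mathbb{Z}_{p^{\alpha-t-1}}\right),
\]
and substituting back produces exactly the claimed decomposition with $t+1$ in place of $t$, completing the induction.

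The routine but necessary bookkeeping is verifying that the index hypotheses propagate correctly: whenever we invoke Lemma~\ref{int 1}(3) on $\mathbb{Z}_{p^{\beta}}$ we need $\beta \ge 1$, and whenever we invoke Lemma~\ref{c2} (or the identity $U^{j-1}(\mathbb{Z}_{p-1}) = U^{j}(\mathbb{Z}_p)$) we need the exponent of $U$ to stay nonnegative. The hypotheses $0 \le t < \alpha$ and $1 \le t \le k$ are precisely what is needed so that $\alpha - t \ge 1$ (hence $\mathbb{Z}_{p^{\alpha - t}}$ is a genuine prime power) and $k - t + 1 \ge 1$ (hence all the superscripts $k, k-1, \dots, k-t+1$ appearing on the $U(\mathbb{Z}_p)$ factors are at least $1$, and $k - t \ge 0$ on the trailing factor). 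I expect the only real subtlety — the ``hard part'' in an otherwise mechanical argument — is making sure that when $t = k$ the trailing term $U^{0}(\mathbb{Z}_{p^{\alpha - t}})$ is interpreted correctly as the ring $\mathbb{Z}_{p^{\alpha-k}}$ itself (as Theorem~\ref{int 2} explicitly allows), and that the induction is set up so that this boundary case is not accidentally excluded.
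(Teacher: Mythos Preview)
Your proposal is correct and follows essentially the same approach as the paper: both arguments establish the $t=1$ case via Lemma~\ref{int 1}(3), Theorem~\ref{int 2}, and Lemma~\ref{c2}, and then peel off one factor at a time from the trailing term $U^{k-t}(\mathbb{Z}_{p^{\alpha-t}})$. The paper phrases the iteration informally (``continuing in the same manner'') while you set it up as an explicit induction with more careful tracking of the index constraints and the $U^0$ boundary case, but the underlying idea is identical.
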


\begin{proof}
Suppose $t=0$, then $1<\alpha$ and $1\leq k$. We have from Lemma \ref{int 1},
$U\left(  \mathbb{Z}_{p^{\alpha}}\right)  \approx\mathbb{Z}_{p-1}%
\times\mathbb{Z}_{p^{\alpha-1}}$ and so $U^{k}\left(  \mathbb{Z}_{p^{\alpha}%
}\right)  \approx U^{k-1}\left(  \mathbb{Z}_{p-1}\right)  \times
U^{k-1}\left(  \mathbb{Z}_{p^{\alpha-1}}\right)  $. But by Lemma \ref{c2},
$U^{k-1}\left(  \mathbb{Z}_{p-1}\right)  \approx U^{k}\left(  \mathbb{Z}%
_{p}\right)  $. Hence,
\begin{equation}
U^{k}\left(  \mathbb{Z}_{p^{\alpha}}\right)  \approx U^{k}\left(
\mathbb{Z}_{p}\right)  \times U^{k-1}\left(  \mathbb{Z}_{p^{\alpha-1}}\right)
\label{eq 1}%
\end{equation}

Now, Suppose $t=2$, $2<\alpha$ and $2\leq k$. Then the isomorphism in
(\ref{eq 1}) can be written as
\[
U^{k-1}\left(  \mathbb{Z}_{p^{\alpha-1}}\right)  \text{ }\approx
U^{k-1}\left(  \mathbb{Z}_{p}\right)  \times U^{k-2}\left(  \mathbb{Z}%
_{p^{\alpha-2}}\right)  .
\]
by replacing $k$ and $\alpha$ by $k-1$ and $\alpha-1$ respectively. Hence,
\[
U^{k}\left(  \mathbb{Z}_{p^{\alpha}}\right)  \approx U^{k}\left(
\mathbb{Z}_{p}\right)  \times U^{k-1}\left(  \mathbb{Z}_{p}\right)  \times
U^{k-2}\left(  \mathbb{Z}_{p^{\alpha-2}}\right)  .
\]
Continuing in the same manner. When $t<\alpha$ and $t\leq k$, we conclude
that, $U^{k-(t-1)}\left(  \mathbb{Z}_{p^{\alpha-(t-1)}}\right)  \approx
U^{k-(t-1)}\left(  \mathbb{Z}_{p}\right)  \times U^{k-t}\left(  \mathbb{Z}%
_{p^{\alpha-t}}\right)  $. Therefore,
\[
U^{k}\left(  \mathbb{Z}_{p^{\alpha}}\right)  \approx U^{k}\left(
\mathbb{Z}_{p}\right)  \times U^{k-1}\left(  \mathbb{Z}_{p}\right)
\times\cdots\times U^{k-t+1}\left(  \mathbb{Z}_{p}\right)  \times U^{k-t}\left(
\mathbb{Z}_{p^{\alpha-t}}\right)  \text{. }%
\]

\end{proof}

\begin{example}
Applying the above theorem we obtain the following. Let $p=47,$ $k=8$,
$\alpha=6$ and $t=5.$ Then
\[
U^{8}\left(  \mathbb{Z}_{47^{6}}\right)  \approx U^{8}\left(  \mathbb{Z}%
_{47}\right)  \times U^{7}\left(  \mathbb{Z}_{47}\right)  \times\cdots\times
U^{4}\left(  \mathbb{Z}_{47}\right)  \times U^{3}\left(  \mathbb{Z}%
_{47}\right)  \text{.}%
\]
But $U\left(  \mathbb{Z}_{47}\right)  \approx\mathbb{Z}_{46}\approx
\mathbb{Z}_{2}\times\mathbb{Z}_{23}$, which implies that $U^{2}\left(
\mathbb{Z}_{47}\right)  \approx U(\mathbb{Z}_{23})\approx\mathbb{Z}%
_{22}\approx\mathbb{Z}_{2}\times\mathbb{Z}_{11}\,$and so $U^{3}\left(
\mathbb{Z}_{47}\right)  \approx U(\mathbb{Z}_{11})\approx$ $\mathbb{Z}%
_{10}\approx\mathbb{Z}_{2}\times\mathbb{Z}_{5}$, $U^{4}\left(  \mathbb{Z}%
_{47}\right)  \approx U(\mathbb{Z}_{5})\approx\mathbb{Z}_{4}$ $\,$ and
$U^{5}\left(  \mathbb{Z}_{47}\right)  \approx U(\mathbb{Z}_{4})\approx
\mathbb{Z}_{2}$ and hence, $U^{6}\left(  \mathbb{Z}_{47}\right)  \approx
U^{7}\left(  \mathbb{Z}_{47}\right)  \approx U^{8}\left(  \mathbb{Z}%
_{47}\right)  \approx\left\{  0\right\}  $. Therefore,
\[
U^{8}\left(  \mathbb{Z}_{47^{6}}\right)  \approx U^{5}\left(  \mathbb{Z}%
_{47}\right)  \times U^{4}\left(  \mathbb{Z}_{47}\right)  \times U^{3}\left(
\mathbb{Z}_{47}\right)  \approx\mathbb{Z}_{2}\times\mathbb{Z}_{4}%
\,\times\mathbb{Z}_{2}\times\mathbb{Z}_{5}.\text{ }%
\]

\end{example}

\begin{theorem}
\label{th1}Let $p$ be an odd prime and let $\alpha>0$ and $k>0$. Then
\end{theorem}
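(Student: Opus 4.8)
The plan is to obtain a closed-form decomposition by specializing Theorem \ref{c3} to the largest admissible value of the parameter $t$, so that the residual factor $U^{k-t}(\mathbb{Z}_{p^{\alpha-t}})$ collapses either to one further term of the form $U^{j}(\mathbb{Z}_{p})$ or to a single cyclic $p$-group. The argument splits into the two regimes $\alpha\leq k$ and $\alpha>k$.

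First, suppose $\alpha\leq k$. If $\alpha=1$ there is nothing to prove, since $U^{k}(\mathbb{Z}_{p^{\alpha}})=U^{k}(\mathbb{Z}_{p})$ outright. If $\alpha\geq2$, I would apply Theorem \ref{c3} with $t=\alpha-1$; the hypotheses hold because $\alpha-1<\alpha$ and $1\leq\alpha-1\leq k-1\leq k$. The residual factor becomes $U^{k-t}(\mathbb{Z}_{p^{\alpha-t}})=U^{k-\alpha+1}(\mathbb{Z}_{p})$, and since $\alpha\leq k$ we have $k-\alpha+1\geq1$, so this is again a genuine iterated group of units. Hence
\[
U^{k}(\mathbb{Z}_{p^{\alpha}})\approx U^{k}(\mathbb{Z}_{p})\times U^{k-1}(\mathbb{Z}_{p})\times\cdots\times U^{k-\alpha+1}(\mathbb{Z}_{p}),
\]
a direct product of $\alpha$ factors with consecutive superscripts $k,k-1,\dots,k-\alpha+1$.

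Second, suppose $\alpha>k$. I would apply Theorem \ref{c3} with $t=k$; the hypotheses hold because $k<\alpha$ and $1\leq k\leq k$. The residual factor is now $U^{k-t}(\mathbb{Z}_{p^{\alpha-t}})=U^{0}(\mathbb{Z}_{p^{\alpha-k}})=\mathbb{Z}_{p^{\alpha-k}}$, using the convention $U^{0}(R)=R$ recorded in Theorem \ref{int 2}. Therefore
\[
U^{k}(\mathbb{Z}_{p^{\alpha}})\approx U^{k}(\mathbb{Z}_{p})\times U^{k-1}(\mathbb{Z}_{p})\times\cdots\times U^{1}(\mathbb{Z}_{p})\times\mathbb{Z}_{p^{\alpha-k}}.
\]

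The statement is thus essentially a bookkeeping corollary of Theorem \ref{c3}, and I do not expect any genuine obstacle; the only points that need care are (i) verifying that the chosen boundary value of $t$ meets the constraints $0\leq t<\alpha$ and $1\leq t\leq k$ in each regime, (ii) dispatching the degenerate case $\alpha=1$ separately, and (iii) checking that the two formulas agree on the boundary $\alpha=k$, where the first reads $\cdots\times U^{1}(\mathbb{Z}_{p})$ and the second, with $\mathbb{Z}_{p^{\alpha-k}}=\mathbb{Z}_{1}=\{0\}$, reduces to the same product of $k=\alpha$ nontrivial factors. As an alternative one could prove the result directly by induction on $\alpha$, using only Lemma \ref{int 1} and Lemma \ref{c2} to peel off one factor $U^{j}(\mathbb{Z}_{p})$ for each unit decrease in $\alpha$, but routing through Theorem \ref{c3} is cleaner.
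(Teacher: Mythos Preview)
Your proof is correct and follows essentially the same approach as the paper: both specialize Theorem~\ref{c3} with $t=\alpha-1$ when $\alpha\leq k$, and for $\alpha>k$ the paper takes $t=k-1$ and then unwraps the remaining factor $U(\mathbb{Z}_{p^{\alpha-k+1}})$ via Lemma~\ref{int 1}, whereas you take $t=k$ directly and invoke the convention $U^{0}(R)=R$ to arrive at the same product. Your separate treatment of the edge case $\alpha=1$ is in fact more careful than the paper's, since $t=\alpha-1=0$ falls outside the stated hypothesis $1\leq t$ of Theorem~\ref{c3}.
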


\begin{enumerate}
\item $U^{k}\left(  \mathbb{Z}_{p^{\alpha}}\right)  \approx U^{k}\left(
\mathbb{Z}_{p}\right)  \times U^{k-1}\left(  \mathbb{Z}_{p}\right)
\times\cdots\times U^{k-\alpha+1}\left(  \mathbb{Z}_{p}\right)  $, when
$\alpha<k,$

\item $U^{k}\left(  \mathbb{Z}_{p^{\alpha}}\right)  \approx U^{k}\left(
\mathbb{Z}_{p}\right)  \times U^{k-1}\left(  \mathbb{Z}_{p}\right)
\times\cdots\times U^{2}\left(  \mathbb{Z}_{p}\right)  \times U\left(
\mathbb{Z}_{p}\right)  $, when $\alpha=k,$

\item $U^{k}\left(  \mathbb{Z}_{p^{\alpha}}\right)  \approx U^{k}\left(
\mathbb{Z}_{p}\right)  \times U^{k-1}\left(  \mathbb{Z}_{p}\right)
\times\cdots\times U\left(  \mathbb{Z}_{p}\right)  \times\mathbb{Z}_{p^{\alpha
-k}}$, when $\alpha>k.$
\end{enumerate}

\begin{proof}
Let $p$ be an odd prime and let $\alpha>0$ and $k>0$.

\begin{enumerate}
\item Let $\alpha\leq k$ and $t=\alpha-1$. Then $t<\alpha$ and $t<k$ and by
Theorem \ref{c3},
\[
U^{k}\left(  \mathbb{Z}_{p^{\alpha}}\right)  \approx U^{k}\left(
\mathbb{Z}_{p}\right)  \times U^{k-1}\left(  \mathbb{Z}_{p}\right)
\times\cdots\times U^{k-t+1}\left(  \mathbb{Z}_{p}\right)  \times U^{k-t}\left(
\mathbb{Z}_{p^{\alpha-t}}\right)  .\text{ }%
\]
Thus,%
\begin{equation}
U^{k}\left(  \mathbb{Z}_{p^{\alpha}}\right)  \approx U^{k}\left(
\mathbb{Z}_{p}\right)  \times U^{k-1}\left(  \mathbb{Z}_{p}\right)
\times\cdots\times U^{k-\alpha+2}\left(  \mathbb{Z}_{p}\right)  \times
U^{k-\alpha+1}\left(  \mathbb{Z}_{p}\right)  . \label{eq3}%
\end{equation}

\item The proof is obtained by replacing $\alpha=k$ in the isomorphism
(\ref{eq3}).

\item Let $\alpha>k$ and $t=k-1.$ Then $t<\alpha$ and $t<k.$ From Theorem
\ref{c3},
\begin{align*}
U^{k}\left(  \mathbb{Z}_{p^{\alpha}}\right)   &  \approx U^{k}\left(
\mathbb{Z}_{p}\right)  \times U^{k-1}\left(  \mathbb{Z}_{p}\right)
\times\cdots\times U^{k-t+1}\left(  \mathbb{Z}_{p}\right)  \times U^{k-t}\left(
\mathbb{Z}_{p^{\alpha-t}}\right)  \text{ }\\
&  \approx U^{k}\left(  \mathbb{Z}_{p}\right)  \times U^{k-1}\left(
\mathbb{Z}_{p}\right)  \times\cdots\times U^{k-\left(  k-1\right)  +1}\left(
\mathbb{Z}_{p}\right)  \times U^{k-\left(  k-1\right)  }\left(  \mathbb{Z}%
_{p^{\alpha-\left(  k-1\right)  }}\right) \\
&  \approx U^{k}\left(  \mathbb{Z}_{p}\right)  \times U^{k-1}\left(
\mathbb{Z}_{p}\right)  \times\cdots\times U^{2}\left(  \mathbb{Z}_{p}\right)
\times U\left(  \mathbb{Z}_{p^{\alpha-k+1}}\right)  .
\end{align*}
Now, since $\alpha-k+1>0$, Lemma \ref{int 1} gives that $U\left(
\mathbb{Z}_{p^{\alpha-k+1}}\right)  \approx\mathbb{Z}_{p-1}\times
\mathbb{Z}_{p^{\alpha-k}}$. Therefore,
\[
U^{k}\left(  \mathbb{Z}_{p^{\alpha}}\right)  \approx U^{k}\left(
\mathbb{Z}_{p}\right)  \times U^{k-1}\left(  \mathbb{Z}_{p}\right)
\times\cdots\times U^{2}\left(  \mathbb{Z}_{p}\right)  \times\mathbb{Z}%
_{p-1}\times\mathbb{Z}_{p^{\alpha-k}}.
\]

\end{enumerate}
\end{proof}

The above theorem gives the decomposition of $U^{k}\left(  \mathbb{Z}%
_{p^{\alpha}}\right)  $ into a direct product of $U^{i}\left(  \mathbb{Z}%
_{p}\right)  $ and $\mathbb{Z}_{p^{j}}$. So by finding the decomposition of
$U^{i}\left(  \mathbb{Z}_{p}\right)  $, for a given odd prime $p$, the
decomposition $U^{k}\left(  \mathbb{Z}_{p^{\alpha}}\right)  $ is established.

Next, we give some application of decompositions of $U^{k}\left(
\mathbb{Z}_{p^{\alpha}}\right)  $ in the case $p=3$.

\begin{corollary}
\label{b7}Let $n=3^{\alpha}$. Then the decomposition of the $k^{th}$ group of
units of $\mathbb{Z}_{n}$ is given by
\[
U^{k}\left(  \mathbb{Z}_{3^{\alpha}}\right)  \approx\left\{
\begin{tabular}
[c]{ll}%
$\mathbb{Z}_{2}\times\mathbb{Z}_{3^{\alpha-k}\text{ }}$ & if $\alpha>k$\\
$\mathbb{Z}_{2}$ & if $\alpha=k$\\
$\left\{  0\right\}  $ & if $\alpha<k$%
\end{tabular}
\right.
\]

\end{corollary}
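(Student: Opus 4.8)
The plan is to specialize Theorem~\ref{th1} to $p=3$, where the factors $U^{i}(\mathbb{Z}_{3})$ collapse almost completely. First I would record the behaviour of the iterated group of units over $\mathbb{Z}_{3}$. Since $U(\mathbb{Z}_{3})\approx\mathbb{Z}_{3-1}=\mathbb{Z}_{2}$, Lemma~\ref{c2} gives $U^{i}(\mathbb{Z}_{3})\approx U^{i-1}(\mathbb{Z}_{2})$ for every $i\geq 1$, and Lemma~\ref{a1} yields $U^{i-1}(\mathbb{Z}_{2})\approx\{0\}$ as soon as $i-1\geq 1$. Hence
\[
U^{i}(\mathbb{Z}_{3})\approx
\begin{cases}
\mathbb{Z}_{2} & \text{if } i=1,\\
\{0\} & \text{if } i\geq 2.
\end{cases}
\]

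Next I would substitute this into the three cases of Theorem~\ref{th1} with $p=3$. When $\alpha>k$, case (3) gives
$U^{k}(\mathbb{Z}_{3^{\alpha}})\approx U^{k}(\mathbb{Z}_{3})\times U^{k-1}(\mathbb{Z}_{3})\times\cdots\times U(\mathbb{Z}_{3})\times\mathbb{Z}_{3^{\alpha-k}}$;
among the superscripts $k,k-1,\dots,1$ only the value $1$ contributes a nontrivial factor (namely $\mathbb{Z}_{2}$), so the product collapses to $\mathbb{Z}_{2}\times\mathbb{Z}_{3^{\alpha-k}}$. When $\alpha=k$, case (2) reads $U^{k}(\mathbb{Z}_{3})\times\cdots\times U^{2}(\mathbb{Z}_{3})\times U(\mathbb{Z}_{3})$, and again only the last factor survives, giving $\mathbb{Z}_{2}$.

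Finally, when $\alpha<k$, case (1) gives $U^{k}(\mathbb{Z}_{3^{\alpha}})\approx U^{k}(\mathbb{Z}_{3})\times\cdots\times U^{k-\alpha+1}(\mathbb{Z}_{3})$; the smallest superscript appearing is $k-\alpha+1$, and the hypothesis $\alpha<k$ forces $k-\alpha+1\geq 2$, so \emph{every} factor is trivial and $U^{k}(\mathbb{Z}_{3^{\alpha}})\approx\{0\}$. The only point needing care — essentially the whole content of the argument — is this last index count: one must verify that in the regime $\alpha<k$ the product of Theorem~\ref{th1} never reaches superscript $1$, which is exactly the inequality $k-\alpha+1\geq 2$; I would also sanity-check the boundary instances $k=1$ and $\alpha=k$ directly against the displayed formula to confirm that no off-by-one error has crept in. No other obstacle is expected, since all the structural work has already been done in Lemmas~\ref{a1},~\ref{c2} and Theorem~\ref{th1}.
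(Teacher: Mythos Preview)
Your proposal is correct and follows essentially the same route as the paper: first reduce $U^{i}(\mathbb{Z}_{3})$ to $U^{i-1}(\mathbb{Z}_{2})$ via Lemma~\ref{c2} and Lemma~\ref{a1}, obtaining $\mathbb{Z}_{2}$ for $i=1$ and $\{0\}$ for $i\geq 2$, and then feed this into the three cases of Theorem~\ref{th1} with $p=3$. The paper's proof is identical in structure and in the key index check $k-\alpha+1>1$ for the $\alpha<k$ case.
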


\begin{proof}
We have $U^{i}\left(  \mathbb{Z}_{3}\right)  =U^{i-1}\left(  U\left(
\mathbb{Z}_{3}\right)  \right)  \approx U^{i-1}(\mathbb{Z}_{2})$. But
$U^{i-1}(\mathbb{Z}_{2})=\mathbb{Z}_{2}$ if $i=1$ and $U^{i-1}(\mathbb{Z}%
_{2})\approx\left\{  0\right\}  $ for $i>1$. Hence,
\[
U^{i}\left(  \mathbb{Z}_{3}\right)  \approx\left\{
\begin{array}
[c]{c}%
\mathbb{Z}_{2}~\text{if }i=1\\
\left\{  0\right\}  ~\text{if }i>1
\end{array}
\right.  \text{ }%
\]

By applying Theorem \ref{th1} for $p=3\,,$ we obtain that when $\alpha<k$,
\[
U^{k}\left(  \mathbb{Z}_{3^{\alpha}}\right)  \approx U^{k}\left(
\mathbb{Z}_{3}\right)  \times U^{k-1}\left(  \mathbb{Z}_{3}\right)
\times\cdots\times U^{k-\alpha+1}\left(  \mathbb{Z}_{3}\right)  .
\]
But $k-j+1>1$ for $j=1,2,\cdots,\alpha.$ and so $U^{k}\left(  \mathbb{Z}%
_{3}\right)  \approx U^{k-1}\left(  \mathbb{Z}_{3}\right)  \approx\cdots\approx
U^{k-\alpha+1}\left(  \mathbb{Z}_{3}\right)  \approx\left\{  0\right\}  $.
Therefore, $U^{k}\left(  \mathbb{Z}_{3^{\alpha}}\right)  \approx\left\{
0\right\}  $.

Now, if $\alpha=k$,%
\[
U^{k}\left(  \mathbb{Z}_{3^{k}}\right)  \approx U^{k}\left(  \mathbb{Z}%
_{3}\right)  \times U^{k-1}\left(  \mathbb{Z}_{3}\right)  \times\cdots\times
U^{2}\left(  \mathbb{Z}_{3}\right)  \times U\left(  \mathbb{Z}_{3}\right)  .
\]
But $U^{k}\left(  \mathbb{Z}_{3}\right)  \approx U^{k-1}\left(  \mathbb{Z}%
_{3}\right)  \approx U^{2}\left(  \mathbb{Z}_{3}\right)  \approx\left\{
0\right\}  $. Hence, $U^{k}\left(  \mathbb{Z}_{3^{k}}\right)  \approx U\left(
\mathbb{Z}_{3}\right)  \approx\mathbb{Z}_{2}$.

If $\alpha>k,$ then
\begin{align*}
U^{k}\left(  \mathbb{Z}_{3^{\alpha}}\right)   &  \approx U^{k}\left(
\mathbb{Z}_{3}\right)  \times U^{k-1}\left(  \mathbb{Z}_{3}\right)
\times\cdots\times U^{2}\left(  \mathbb{Z}_{3}\right)  \times U\left(
\mathbb{Z}_{3}\right)  \times\mathbb{Z}_{3^{\alpha-k}}\\
&  \approx\mathbb{Z}_{2}\times\mathbb{Z}_{3^{\alpha-k}}.
\end{align*}

\end{proof}

The following corollary is a direct conclusion done by combining Theorem
\ref{th1} and Lemma \ref{c2}.

\begin{corollary}
\label{th2}Let $p$ be an odd prime and let $\alpha>0$ and $k>0$. Then
\end{corollary}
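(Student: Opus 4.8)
The plan is to derive each of the three cases directly from the corresponding case of Theorem~\ref{th1} by replacing, term by term, every factor of the form $U^{i}\left(\mathbb{Z}_{p}\right)$ according to Lemma~\ref{c2}. Recall that Lemma~\ref{c2} gives $U^{i}\left(\mathbb{Z}_{p}\right)\approx U^{i-1}\left(\mathbb{Z}_{p-1}\right)$ for all $i\geq 1$, and that for $i=1$ this reads $U\left(\mathbb{Z}_{p}\right)\approx\mathbb{Z}_{p-1}$ once we use the convention $U^{0}(R)=R$ recorded in Theorem~\ref{int 2}. No new machinery is needed beyond these two facts.

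First I would treat the case $\alpha<k$. Here Theorem~\ref{th1}(1) expresses $U^{k}\left(\mathbb{Z}_{p^{\alpha}}\right)$ as the direct product of the $\alpha$ factors $U^{k}\left(\mathbb{Z}_{p}\right),U^{k-1}\left(\mathbb{Z}_{p}\right),\ldots,U^{k-\alpha+1}\left(\mathbb{Z}_{p}\right)$; since $\alpha<k$, each exponent appearing is at least $2$, so Lemma~\ref{c2} converts these into $U^{k-1}\left(\mathbb{Z}_{p-1}\right),U^{k-2}\left(\mathbb{Z}_{p-1}\right),\ldots,U^{k-\alpha}\left(\mathbb{Z}_{p-1}\right)$, giving
\[
U^{k}\left(\mathbb{Z}_{p^{\alpha}}\right)\approx U^{k-1}\left(\mathbb{Z}_{p-1}\right)\times U^{k-2}\left(\mathbb{Z}_{p-1}\right)\times\cdots\times U^{k-\alpha}\left(\mathbb{Z}_{p-1}\right).
\]
For $\alpha=k$, the same substitution applied to the factors $U^{k}\left(\mathbb{Z}_{p}\right),\ldots,U^{2}\left(\mathbb{Z}_{p}\right),U\left(\mathbb{Z}_{p}\right)$ of Theorem~\ref{th1}(2) turns the first $k-1$ of them into $U^{k-1}\left(\mathbb{Z}_{p-1}\right),\ldots,U\left(\mathbb{Z}_{p-1}\right)$ and the last one, $U\left(\mathbb{Z}_{p}\right)$, into $\mathbb{Z}_{p-1}$. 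For $\alpha>k$, Theorem~\ref{th1}(3) reads $U^{k}\left(\mathbb{Z}_{p^{\alpha}}\right)\approx U^{k}\left(\mathbb{Z}_{p}\right)\times\cdots\times U\left(\mathbb{Z}_{p}\right)\times\mathbb{Z}_{p^{\alpha-k}}$; applying Lemma~\ref{c2} to the $k$ unit-group factors and leaving $\mathbb{Z}_{p^{\alpha-k}}$ in place yields $U^{k-1}\left(\mathbb{Z}_{p-1}\right)\times\cdots\times\mathbb{Z}_{p-1}\times\mathbb{Z}_{p^{\alpha-k}}$.

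Since the whole argument is a routine termwise substitution into an already-established decomposition, I do not expect a genuine obstacle. The only place that warrants a little care is the index bookkeeping at the tail of the product: one must track that the last unit-group factor in Theorem~\ref{th1} is $U^{k-\alpha+1}\left(\mathbb{Z}_{p}\right)$ when $\alpha<k$ and $U^{1}\left(\mathbb{Z}_{p}\right)$ when $\alpha\geq k$, and in particular recognize that $U^{1}\left(\mathbb{Z}_{p}\right)$ is to be replaced by the ring $\mathbb{Z}_{p-1}$ itself rather than by a group of units of it.
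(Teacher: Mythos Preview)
Your proposal is correct and follows exactly the paper's approach: the paper does not give a separate proof but simply declares the corollary ``a direct conclusion done by combining Theorem~\ref{th1} and Lemma~\ref{c2},'' which is precisely the termwise substitution you spell out. The only cosmetic point you left implicit in case~(3) is that $\mathbb{Z}_{p-1}\times\mathbb{Z}_{p^{\alpha-k}}\approx\mathbb{Z}_{p^{\alpha-k}(p-1)}$ by the Chinese Remainder Theorem (since $\gcd(p-1,p^{\alpha-k})=1$), matching the form stated in the corollary.
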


\begin{enumerate}
\item $U^{k}\left(  \mathbb{Z}_{p^{\alpha}}\right)  \approx U^{k-1}\left(
\mathbb{Z}_{p-1}\right)  \times U^{k-2}\left(  \mathbb{Z}_{p-1}\right)
\times\cdots\times U^{k-\alpha}\left(  \mathbb{Z}_{p-1}\right)  $, when
$\alpha<k,$

\item $U^{k}\left(  \mathbb{Z}_{p^{k}}\right)  \approx U^{k-1}\left(
\mathbb{Z} _{p-1}\right)  \times U^{k-2}\left(  \mathbb{Z}_{p-2}\right)
\times\cdots\times U\left(  \mathbb{Z}_{p-1}\right)  \times\mathbb{Z}_{p-1}$,
when $\alpha=k,$

\item $U^{k}\left(  \mathbb{Z}_{p^{\alpha}}\right)  \approx U^{k-1}\left(
\mathbb{Z}_{p-1}\right)  \times U^{k-2}\left(  \mathbb{Z}_{p-1}\right)
\times\cdots\times\mathbb{Z}_{p^{\alpha-k}(p-1)}$, when $\alpha>k.$
\end{enumerate}

The next corollaries refer to Corollary \ref{th2} in determining the structure
of $U^{k}\left(  \mathbb{Z}_{p^{\alpha}}\right)  $ by knowing the structure of
$U^{i}\left(  \mathbb{Z}_{p-1}\right)  $ where $i<k$. We apply this on
$U^{k}\left(  \mathbb{Z}_{5^{\alpha}}\right)  $ and $U^{k}\left(
\mathbb{Z}_{7^{\alpha}}\right)  $.

\begin{corollary}
Let $n=5^{\alpha}$. Then the decomposition of the $k^{th}$ group of units of
$\mathbb{Z}_{n}$ is given
\[
U^{k}\left(  \mathbb{Z}_{5^{\alpha}}\right)  \approx\left\{
\begin{tabular}
[c]{ll}%
$\mathbb{Z}_{2}\times\mathbb{Z}_{4\text{ }}\times\mathbb{Z}_{5^{\alpha
-k}\text{ }}$ & if $\alpha>k.$\\
$\mathbb{Z}_{2}\times\mathbb{Z}_{4}$ & if $\alpha=k$\\
$\mathbb{Z}_{2}$ & if $\alpha=k-1$\\
$\left\{  0\right\}  $ & if $\alpha<k-1$%
\end{tabular}
\right.
\]

\end{corollary}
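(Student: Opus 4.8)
The plan is to instantiate Corollary~\ref{th2} with $p=5$, so that $p-1=4$, and then reduce everything to the tower of iterated unit groups $U^i(\mathbb{Z}_4)$. The key computation is the short tower starting from $\mathbb{Z}_4$: by Lemma~\ref{int 1} we have $U(\mathbb{Z}_4)\approx\mathbb{Z}_2$, and then by Lemma~\ref{a1} (or the $p=3$ computation already used in Corollary~\ref{b7}) $U^i(\mathbb{Z}_4)\approx\{0\}$ for all $i\geq2$. Thus
\[
U^i(\mathbb{Z}_4)\approx\left\{
\begin{tabular}[c]{ll}
$\mathbb{Z}_4$ & if $i=0$\\
$\mathbb{Z}_2$ & if $i=1$\\
$\{0\}$ & if $i\geq2$
\end{tabular}
\right.
\]
which is the only ingredient needed to collapse the products appearing in Corollary~\ref{th2}.

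First I would handle the case $\alpha<k$. Corollary~\ref{th2}(1) with $p=5$ gives $U^k(\mathbb{Z}_{5^\alpha})\approx U^{k-1}(\mathbb{Z}_4)\times U^{k-2}(\mathbb{Z}_4)\times\cdots\times U^{k-\alpha}(\mathbb{Z}_4)$. The exponents run from $k-\alpha$ up to $k-1$, all of which are $\geq 2$ precisely when $k-\alpha\geq2$, i.e. $\alpha<k-1$; in that subcase every factor is trivial, giving $U^k(\mathbb{Z}_{5^\alpha})\approx\{0\}$. When $\alpha=k-1$ the lowest exponent is $k-\alpha=1$ and all higher ones are $\geq2$, so only the $U^{1}(\mathbb{Z}_4)\approx\mathbb{Z}_2$ factor survives, giving $U^k(\mathbb{Z}_{5^{k-1}})\approx\mathbb{Z}_2$. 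Next I would treat $\alpha=k$: Corollary~\ref{th2}(2) gives a product whose factors are $U^{k-j}(\mathbb{Z}_4)$ for $j$ ranging so that the exponents are $k-1,k-2,\dots,1$ together with a terminal $\mathbb{Z}_{p-1}=\mathbb{Z}_4$; all the $U^{k-j}(\mathbb{Z}_4)$ with exponent $\geq2$ vanish, leaving the exponent-$1$ term $\mathbb{Z}_2$ and the terminal $\mathbb{Z}_4$, hence $U^k(\mathbb{Z}_{5^k})\approx\mathbb{Z}_2\times\mathbb{Z}_4$. Finally, for $\alpha>k$, Corollary~\ref{th2}(3) gives factors $U^{k-j}(\mathbb{Z}_4)$ with exponents $k-1,\dots,1$ together with a terminal $\mathbb{Z}_{5^{\alpha-k}(p-1)}=\mathbb{Z}_{4\cdot5^{\alpha-k}}\approx\mathbb{Z}_4\times\mathbb{Z}_{5^{\alpha-k}}$ (since $\gcd(4,5)=1$); the surviving unit-group factor is again $\mathbb{Z}_2$, so $U^k(\mathbb{Z}_{5^\alpha})\approx\mathbb{Z}_2\times\mathbb{Z}_4\times\mathbb{Z}_{5^{\alpha-k}}$.

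The only mild subtlety is bookkeeping: one must be careful that the index ranges in Corollary~\ref{th2} are read off correctly so that exactly one factor of $U^i(\mathbb{Z}_4)$ has $i=1$ in each of the cases $\alpha\geq k-1$, and that the terminal factor is handled separately (it is $U^0=\,$identity in cases (2) and (3), not an iterated unit group). There is also the edge case of small $k$ relative to $\alpha$ where Corollary~\ref{th2}(1) has too few factors or even an empty product; writing $U^k(\mathbb{Z}_{5^\alpha})$ directly via Theorem~\ref{th1} in terms of $U^i(\mathbb{Z}_5)$ and using $U(\mathbb{Z}_5)\approx\mathbb{Z}_4$, $U^2(\mathbb{Z}_5)\approx U(\mathbb{Z}_4)\approx\mathbb{Z}_2$, $U^i(\mathbb{Z}_5)\approx\{0\}$ for $i\geq3$ gives a cleaner derivation and sidesteps the empty-product concern. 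Either way there is no genuine obstacle — the proof is a direct specialization of the already-established decomposition theorems to $p=5$ combined with the explicit short tower over $\mathbb{Z}_5$ (equivalently $\mathbb{Z}_4$).
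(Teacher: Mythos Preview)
Your proposal is correct and follows essentially the same route as the paper: both compute the short tower $U^{i}(\mathbb{Z}_{4})$ (the paper via Theorem~\ref{b4}, you via Lemmas~\ref{int 1} and~\ref{a1}), then specialize Corollary~\ref{th2} to $p=5$ and read off the three cases, splitting $\alpha<k$ into $\alpha=k-1$ and $\alpha<k-1$. Your additional remark about obtaining the same result via Theorem~\ref{th1} and the tower $U^{i}(\mathbb{Z}_{5})$ is a harmless reformulation of the same argument.
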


\begin{proof}
Using Theorem \ref{b4}, and setting $\alpha=2$ the case $\alpha>2k$ is
rejected, so we are left with
\begin{align*}
U^{k}\left(  \mathbb{Z}_{4}\right)   &  \approx\left\{
\begin{tabular}
[c]{ll}%
$\mathbb{Z}_{2}$ & if $k=1$\\
$\left\{  0\right\}  $ & if $k>1$%
\end{tabular}
\right. \\
U^{k-i}\left(  \mathbb{Z}_{4}\right)   &  \approx\left\{
\begin{tabular}
[c]{ll}%
$\mathbb{Z}_{2}$ & if $i=k-1$\\
$\left\{  0\right\}  $ & if $i<k-1$%
\end{tabular}
\right.  ,\text{ }i=1,2,\ldots,k-1
\end{align*}

By applying Corollary \ref{th2} we get
\[
U^{k}\left(  \mathbb{Z}_{5^{\alpha}}\right)  \approx\left\{
\begin{tabular}
[c]{ll}%
$U^{k-1}\left(  \mathbb{Z}_{4}\right)  \times U^{k-2}\left(  \mathbb{Z}%
_{4}\right)  \times\cdots\times U^{k-\alpha}\left(  \mathbb{Z}_{4}\right)  $ &
if $\alpha<k$\\
$U^{k-1}\left(  \mathbb{Z}_{4}\right)  \times U^{k-2}\left(  \mathbb{Z}%
_{4}\right)  \times\cdots\times U\left(  \mathbb{Z}_{4}\right)  \times
\mathbb{Z}_{4}$ & if $\alpha=k$\\
$U^{k-1}\left(  \mathbb{Z}_{4}\right)  \times U^{k-2}\left(  \mathbb{Z}%
_{4}\right)  \times\cdots\times U\left(  \mathbb{Z}_{4}\right)  \times
\mathbb{Z}_{4}\times\mathbb{Z}_{5^{\alpha-k}}$ & if $\alpha>k$%
\end{tabular}
\right.
\]

for $\alpha<k$, if $\alpha=k-1$ $U^{k-\alpha}\left(  \mathbb{Z}_{4}\right)
\approx\mathbb{Z}_{2}$ and $U^{k-i}\left(  \mathbb{Z}_{4}\right)
\approx\left\{  0\right\}  $ for $i<k-1,$ thus $U^{k}\left(  \mathbb{Z}%
_{5^{\alpha}}\right)  \approx\mathbb{Z}_{2}$. and if $\alpha<k-1,$ we have
$U^{k-i}\left(  \mathbb{Z}_{4}\right)  \approx\left\{  0\right\}  $ for
$i=1,2,\ldots,\alpha$. Thus $U^{k}\left(  \mathbb{Z}_{5^{\alpha}}\right)
\approx\mathbb{Z}_{2}$.

For the second case $\alpha=k,$ all the summands are trivial except $U\left(
\mathbb{Z}_{4}\right)  \approx\mathbb{Z}_{2}$. Then $U^{k}\left(
\mathbb{Z}_{5^{\alpha}}\right)  \approx\mathbb{Z}_{2}\times\mathbb{Z}_{4}$.
Consequently to the case $\alpha=k$, we can conclude directly the case
$\alpha>k$, that is $U^{k}\left(  \mathbb{Z}_{5^{\alpha}}\right)
\approx\mathbb{Z}_{2}\times\mathbb{Z}_{4}\times\mathbb{Z}_{5^{\alpha-k}}$.
\end{proof}

\begin{corollary}
Let $n=7^{\alpha}$. Then the decomposition of the $k^{th}$ group of units of
$\mathbb{Z}_{n}$ is given
\[
U^{k}\left(  \mathbb{Z}_{7^{\alpha}}\right)  \approx\left\{
\begin{tabular}
[c]{ll}%
$\mathbb{Z}_{2}\times\mathbb{Z}_{6}\times\mathbb{Z}_{7^{\alpha-k}}$ & if
$\alpha>k.$\\
$\mathbb{Z}_{2}\times\mathbb{Z}_{6}$ & if $\alpha=k$\\
$\mathbb{Z}_{2}$ & if $\alpha=k-1$\\
$\left\{  0\right\}  $ & if $\alpha<k-1$%
\end{tabular}
\right.
\]

\end{corollary}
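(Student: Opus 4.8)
The plan is to mirror the proof of the preceding corollary for $n = 5^{\alpha}$: specialize Corollary \ref{th2} to $p = 7$ (so $p-1 = 6$), which expresses $U^{k}(\mathbb{Z}_{7^{\alpha}})$ as a direct product of copies of $U^{i}(\mathbb{Z}_{6})$ together with, in two of the cases, one trailing cyclic factor, and then determine the tower $U^{i}(\mathbb{Z}_{6})$, $i\ge 1$. For the latter, since $\mathbb{Z}_{6} \cong \mathbb{Z}_{2} \oplus \mathbb{Z}_{3}$, Theorem \ref{int 2} gives $U^{i}(\mathbb{Z}_{6}) \approx U^{i}(\mathbb{Z}_{2}) \times U^{i}(\mathbb{Z}_{3})$; by Lemma \ref{a1} the factor $U^{i}(\mathbb{Z}_{2})$ is trivial for all $i \ge 1$, and the computation inside the proof of Corollary \ref{b7} gives $U^{i}(\mathbb{Z}_{3}) \approx \mathbb{Z}_{2}$ for $i = 1$ and $U^{i}(\mathbb{Z}_{3}) \approx \{0\}$ for $i > 1$. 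Hence $U^{i}(\mathbb{Z}_{6}) \approx \mathbb{Z}_{2}$ for $i = 1$ and $U^{i}(\mathbb{Z}_{6}) \approx \{0\}$ for $i > 1$; equivalently $U^{k-i}(\mathbb{Z}_{6}) \approx \mathbb{Z}_{2}$ exactly when $i = k-1$ and is trivial for $i < k-1$. As a sequence of abstract groups this is the same tower that arose for $\mathbb{Z}_{4}$ in the $n = 5^{\alpha}$ corollary; only the trailing cyclic factor changes, from $\mathbb{Z}_{4}$ to $\mathbb{Z}_{6}$.

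It then remains to substitute $p = 7$ into the three parts of Corollary \ref{th2}. When $\alpha < k$, part (1) gives $U^{k}(\mathbb{Z}_{7^{\alpha}}) \approx U^{k-1}(\mathbb{Z}_{6}) \times \cdots \times U^{k-\alpha}(\mathbb{Z}_{6})$; all factors with superscript $\ge 2$ vanish, so for $\alpha = k-1$ the single surviving factor is $U^{1}(\mathbb{Z}_{6}) \approx \mathbb{Z}_{2}$, while for $\alpha < k-1$ the product is trivial. When $\alpha = k$, part (2) gives $U^{k}(\mathbb{Z}_{7^{k}}) \approx U^{k-1}(\mathbb{Z}_{6}) \times \cdots \times U(\mathbb{Z}_{6}) \times \mathbb{Z}_{6}$, where $U(\mathbb{Z}_{6}) \approx \mathbb{Z}_{2}$ is the only nontrivial iterated factor, yielding $\mathbb{Z}_{2} \times \mathbb{Z}_{6}$. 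When $\alpha > k$, part (3) gives $U^{k}(\mathbb{Z}_{7^{\alpha}}) \approx U^{k-1}(\mathbb{Z}_{6}) \times \cdots \times U(\mathbb{Z}_{6}) \times \mathbb{Z}_{7^{\alpha-k}\cdot 6}$; since $\gcd(6, 7^{\alpha-k}) = 1$ one splits $\mathbb{Z}_{6\cdot 7^{\alpha-k}} \approx \mathbb{Z}_{6} \times \mathbb{Z}_{7^{\alpha-k}}$, and again only the $\mathbb{Z}_{2}$ coming from $U(\mathbb{Z}_{6})$ survives, so $U^{k}(\mathbb{Z}_{7^{\alpha}}) \approx \mathbb{Z}_{2} \times \mathbb{Z}_{6} \times \mathbb{Z}_{7^{\alpha-k}}$. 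Assembling the four outcomes gives the asserted table.

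No step here is genuinely hard: the argument is a straight specialization of Corollary \ref{th2}, the only bookkeeping being which superscripts in the product chain exceed $1$ (hence contribute nothing) together with the Chinese Remainder splitting of the trailing cyclic factor. The one mild subtlety, handled exactly as in the $n = 5^{\alpha}$ corollary, is the reading of the small-$k$ boundary: for $k = 1$ the iterated chain $U^{k-1}(\mathbb{Z}_{6}) \times \cdots$ is empty, so $U(\mathbb{Z}_{7^{\alpha}})$ is then just read off from Lemma \ref{int 1}.
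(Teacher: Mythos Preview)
Your proof is correct and follows essentially the same route as the paper: both specialize Corollary \ref{th2} to $p=7$, compute $U^{i}(\mathbb{Z}_{6}) \approx U^{i}(\mathbb{Z}_{2})\times U^{i}(\mathbb{Z}_{3})$ using Lemma \ref{a1} (the paper cites Theorem \ref{b4}) and Corollary \ref{b7}, and then read off the four cases. The only cosmetic difference is that you keep the trailing factor as $\mathbb{Z}_{6\cdot 7^{\alpha-k}}$ and split it explicitly via CRT, whereas the paper writes it directly as $\mathbb{Z}_{6}\times\mathbb{Z}_{7^{\alpha-k}}$.
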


\begin{proof}
From Corollary \ref{th2} we relate the decomposition of $U^{k}\left(
\mathbb{Z}_{p^{\alpha}}\right)  $ to $U^{i}\left(  \mathbb{Z}_{p-1}\right)
\,$for $i<k$ then for $p=7$ we need to find $U^{i}\left(  \mathbb{Z}%
_{6}\right)  \,$for $i<k.$ we have $U^{i}\left(  \mathbb{Z}_{6}\right)
\approx U^{i}\left(  \mathbb{Z}_{2}\right)  \times U^{i}\left(  \mathbb{Z}%
_{3}\right)  $. However from Theorem \ref{b4} $U^{i}\left(  \mathbb{Z}%
_{2}\right)  \approx\left\{  0\right\}  $ and from Corollary \ref{b7}
\[
U^{i}\left(  \mathbb{Z}_{3}\right)  \approx\left\{
\begin{array}
[c]{c}%
\mathbb{Z}_{2}~\text{if }i=1\\
\left\{  0\right\}  ~\text{if }i>1
\end{array}
\right.  \text{ }%
\]
then
\[
U^{i}\left(  \mathbb{Z}_{6}\right)  \approx\left\{
\begin{array}
[c]{c}%
\mathbb{Z}_{2}~\text{if }i=1\\
\left\{  0\right\}  ~\text{if }i>1
\end{array}
\right.  \text{ }%
\]

By applying Corollary \ref{th2} we get

for $\alpha>k$,
\begin{align*}
U^{k}\left(  \mathbb{Z}_{7^{\alpha}}\right)   &  \approx U^{k-1}\left(
\mathbb{Z}_{6}\right)  \times U^{k-2}\left(  \mathbb{Z}_{6}\right)
\times\cdots\times U\left(  \mathbb{Z}_{6}\right)  \times\mathbb{Z}_{6}%
\times\mathbb{Z}_{7^{\alpha-k}}\\
&  \approx\mathbb{Z}_{2}\times\mathbb{Z}_{6}\times\mathbb{Z}_{7^{\alpha-k}}%
\end{align*}
for $\alpha=k$,
\begin{align*}
U^{k}\left(  \mathbb{Z}_{7^{\alpha}}\right)   &  \approx U^{k-1}\left(
\mathbb{Z}_{6}\right)  \times U^{k-2}\left(  \mathbb{Z}_{6}\right)
\times\cdots\times U\left(  \mathbb{Z}_{6}\right)  \times\mathbb{Z}_{6}\\
&  \approx\mathbb{Z}_{2}\times\mathbb{Z}_{6}%
\end{align*}

for $\alpha<k$, if $\alpha=k-1$
\begin{align*}
U^{k}\left(  \mathbb{Z}_{7^{\alpha}}\right)   &  \approx U^{k-1}\left(
\mathbb{Z}_{6}\right)  \times U^{k-2}\left(  \mathbb{Z}_{6}\right)
\times\cdots\times U\left(  \mathbb{Z}_{6}\right) \\
&  \approx\mathbb{Z}_{2}%
\end{align*}
and if $\alpha=k-2$
\begin{align*}
U^{k}\left(  \mathbb{Z}_{7^{\alpha}}\right)   &  \approx U^{k-1}\left(
\mathbb{Z}_{6}\right)  \times U^{k-2}\left(  \mathbb{Z}_{6}\right)
\times\cdots\times U^{2}\left(  \mathbb{Z}_{6}\right) \\
&  \approx\left\{  0\right\}  .
\end{align*}

and thus $U^{k}\left(  \mathbb{Z}_{7^{\alpha}}\right)  \approx\left\{
0\right\}  $ for $\alpha<k-2$. Therefore, we obtain the required.
\end{proof}

We end this section by noting that for higher prime integers the decomposition
is more complicated. But we noticed that the decomposition of $U^{k}\left(
\mathbb{Z}_{3^{\alpha}}\right)  $ and $U^{k}\left(  \mathbb{Z}_{5^{\alpha}%
}\right)  $ were obtained knowing the decomposition of $U^{k}\left(
\mathbb{Z}_{2}\right)  $. Also the decomposition of $U^{k}\left(
\mathbb{Z}_{7^{\alpha}}\right)  $ is obtained from the decomposition of
$U^{k}\left(  \mathbb{Z}_{2}\right)  $ and $U^{k}\left(  \mathbb{Z}%
_{3}\right)  $ and so on. We may conclude that each decomposition of
$U^{k}\left(  \mathbb{Z}_{p^{\alpha}}\right)  $ has a Tree of decompositions
of $U^{k}\left(  \mathbb{Z}_{p_{j}}\right)  $ for a given sequence of primes
$p_{i}$. This problem is discussed in Section 5.

\section{Boolean and Trivial $U^{k}\left(  Z_{n}\right)  $}

The previous section opened the importance in examining the rings that have
$k^{th}$ group of units, $U^{k}\left(  \mathbb{Z}_{n}\right)  $, a boolean and
those that are trivial. In this section, we study these two problems. First,
we consider the case $n=2^{\alpha}$, then when $n=p^{\alpha},$\ where $p$ is a
odd prime. We solve the problem completely when $n=2^{\alpha}$, while the case
when $n=p^{\alpha}$ is examined and some necessary conditions are given. We
end this section by concluding some properties of having $U^{k}\left(
\mathbb{Z}_{n}\right)  $ a boolean or a trivial group.

In the following theorem our two major problems are solved in the case
$n=2^{\alpha}$ and $n=3^{\alpha}$.

\begin{theorem}
\label{r4}Let $\alpha\geq1$ and $k\geq1$. Then

\begin{enumerate}
\item $U^{k}\left(  \mathbb{Z}_{2^{\alpha}}\right)  $ is a boolean ring if and
only if $\alpha=2k$ or $\alpha=2k+1\,$and is trivial if and only if
$\alpha<2k.$

\item $U^{k}\left(  \mathbb{Z}_{3^{\alpha}}\right)  $ is boolean ring if and
only if $\alpha=k$ and is trivial if and only if $\alpha<k$.
\end{enumerate}
\end{theorem}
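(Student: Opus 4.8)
The plan is to read both parts directly off the explicit decompositions already established, namely Theorem~\ref{b4} for $n=2^{\alpha}$ and Corollary~\ref{b7} for $n=3^{\alpha}$, and then recall the elementary fact that a finite commutative ring with identity is boolean precisely when its additive group is elementary abelian $2$-group, equivalently when in the cyclic decomposition every summand is $\mathbb{Z}_2$; and that it is trivial precisely when the decomposition is $\{0\}$. So in each case the argument reduces to inspecting which of the three branches in the piecewise formula yields a product of copies of $\mathbb{Z}_2$ (boolean) and which yields $\{0\}$ (trivial).

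For part (1), I would invoke Theorem~\ref{b4}. The trivial case is immediate: $U^{k}(\mathbb{Z}_{2^{\alpha}})\approx\{0\}$ exactly in branch (3), i.e.\ $\alpha<2k$, so ``trivial $\iff \alpha<2k$'' needs no further work. For the boolean case I would go branch by branch. If $\alpha>2k$ then $U^{k}(\mathbb{Z}_{2^{\alpha}})\approx\mathbb{Z}_2\times\mathbb{Z}_{2^{\alpha-2k}}$; this is boolean iff $2^{\alpha-2k}\in\{1,2\}$, i.e.\ iff $\alpha-2k\le 1$, but under the hypothesis $\alpha>2k$ this forces $\alpha=2k+1$. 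If $\alpha=2k$ then the group is $\mathbb{Z}_2$, which is boolean. (One should note here the degenerate subcase $k=1,\alpha=2$, where the group $U(\mathbb{Z}_4)\approx\mathbb{Z}_2$ is still boolean, consistent with $\alpha=2k$.) If $\alpha<2k$ the group is $\{0\}$, which is vacuously boolean but already covered by the ``trivial'' clause; the statement as phrased lists the nontrivial boolean values, so I would restrict attention to the nontrivial cases and conclude that among those, boolean occurs iff $\alpha=2k$ or $\alpha=2k+1$. The remark already inserted after Theorem~\ref{b4} ($U^{k}(\mathbb{Z}_{2^{2k+1}})\approx\mathbb{Z}_2\times\mathbb{Z}_2$) does most of this bookkeeping.

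For part (2), the same strategy applies to Corollary~\ref{b7}: $U^{k}(\mathbb{Z}_{3^{\alpha}})\approx\mathbb{Z}_2\times\mathbb{Z}_{3^{\alpha-k}}$ if $\alpha>k$, $\approx\mathbb{Z}_2$ if $\alpha=k$, and $\approx\{0\}$ if $\alpha<k$. Trivial holds exactly in the last branch, giving ``trivial $\iff \alpha<k$''. For boolean: when $\alpha>k$ the summand $\mathbb{Z}_{3^{\alpha-k}}$ has odd order $3^{\alpha-k}>1$, hence contains an element of order $3$, so the ring is never boolean in this branch; when $\alpha=k$ the ring is $\mathbb{Z}_2$, which is boolean; when $\alpha<k$ it is trivial. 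Therefore among nontrivial cases, boolean occurs iff $\alpha=k$.

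I do not expect a genuine obstacle here — both parts are corollaries of the decomposition theorems combined with the characterization ``boolean $\iff$ all cyclic factors are $\mathbb{Z}_2$''. The only point requiring mild care is the bookkeeping at the boundary values and the slightly awkward interface between the ``boolean'' and ``trivial'' clauses (a trivial ring is vacuously boolean, so the listed boolean values are understood as the \emph{nontrivial} ones); I would state this convention explicitly at the start of the proof so the ``if and only if'' statements are unambiguous.

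\begin{proof}
Recall that a finite commutative ring with identity is boolean if and only if, in the decomposition of its additive group into cyclic factors, every factor is $\mathbb{Z}_2$; and it is the trivial ring if and only if that decomposition is $\{0\}$. (In particular the trivial ring is vacuously boolean; below, when we speak of the ring being boolean we mean it is a \emph{nontrivial} boolean ring, which is the content of the statement.)

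(1) By Theorem \ref{b4}, $U^{k}\left(\mathbb{Z}_{2^{\alpha}}\right)\approx\{0\}$ if and only if $\alpha<2k$, which proves the triviality assertion. For the boolean assertion, suppose first $\alpha>2k$. Then $U^{k}\left(\mathbb{Z}_{2^{\alpha}}\right)\approx\mathbb{Z}_{2}\times\mathbb{Z}_{2^{\alpha-2k}}$, which is boolean precisely when $2^{\alpha-2k}\in\{1,2\}$, i.e.\ when $\alpha-2k\leq 1$; combined with $\alpha>2k$ this gives $\alpha=2k+1$, and indeed $U^{k}\left(\mathbb{Z}_{2^{2k+1}}\right)\approx\mathbb{Z}_{2}\times\mathbb{Z}_{2}$. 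If $\alpha=2k$, then $U^{k}\left(\mathbb{Z}_{2^{\alpha}}\right)\approx\mathbb{Z}_{2}$, which is boolean. If $\alpha<2k$, the ring is trivial. Hence a nontrivial $U^{k}\left(\mathbb{Z}_{2^{\alpha}}\right)$ is boolean if and only if $\alpha=2k$ or $\alpha=2k+1$.

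(2) By Corollary \ref{b7}, $U^{k}\left(\mathbb{Z}_{3^{\alpha}}\right)\approx\{0\}$ if and only if $\alpha<k$, which is the triviality assertion. If $\alpha>k$, then $U^{k}\left(\mathbb{Z}_{3^{\alpha}}\right)\approx\mathbb{Z}_{2}\times\mathbb{Z}_{3^{\alpha-k}}$; since $\alpha-k\geq 1$, the factor $\mathbb{Z}_{3^{\alpha-k}}$ has order divisible by $3$ and so contains an element of order $3$, whence the ring is not boolean. If $\alpha=k$, then $U^{k}\left(\mathbb{Z}_{3^{\alpha}}\right)\approx\mathbb{Z}_{2}$, which is boolean. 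Therefore a nontrivial $U^{k}\left(\mathbb{Z}_{3^{\alpha}}\right)$ is boolean if and only if $\alpha=k$.
\end{proof}
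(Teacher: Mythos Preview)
Your proof is correct and follows exactly the approach of the paper, which simply states that the result is a direct consequence of Theorem~\ref{b4} and Corollary~\ref{b7}. You have merely spelled out the case-by-case inspection that the paper leaves implicit.
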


\begin{proof}
The proof is a direct consequence from Theorem \ref{b4} and Corollary \ref{b7}.
\end{proof}

Next, we consider the case when $p$ is an odd prime integer and since in
Theorem \ref{r4} the special case $p=3$ is solved so we may consider the cases
when $p$ is an odd prime integer different than $3$.

\begin{lemma}
\label{d1}Let $p$ be an odd prime different from $3$. If $U^{k}\left(
\mathbb{Z}_{p^{\alpha}}\right)  $ is boolean ring, then $\alpha<k$.
\end{lemma}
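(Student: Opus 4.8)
The plan is to use the decomposition of $U^{k}(\mathbb{Z}_{p^{\alpha}})$ from Theorem \ref{th1} (equivalently Corollary \ref{th2}) and analyze when such a direct product can be boolean. Recall that a finite abelian group is boolean (i.e.\ the additive group of a boolean ring, meaning every element has order dividing $2$) precisely when it is isomorphic to a product of copies of $\mathbb{Z}_{2}$. So the strategy is: assume $U^{k}(\mathbb{Z}_{p^{\alpha}})$ is boolean and derive $\alpha < k$ by ruling out the cases $\alpha = k$ and $\alpha > k$.

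First I would dispose of the case $\alpha > k$. By part (3) of Theorem \ref{th1}, $U^{k}(\mathbb{Z}_{p^{\alpha}})$ has $\mathbb{Z}_{p^{\alpha-k}}$ as a direct factor, and since $p$ is odd and $\alpha - k \geq 1$, this factor has an element of odd order $p > 1$; hence the group is not boolean. Next, the case $\alpha = k$: by part (2) of Theorem \ref{th1}, $U^{k}(\mathbb{Z}_{p^{k}})$ has $U(\mathbb{Z}_{p}) \approx \mathbb{Z}_{p-1}$ as a direct factor. Since $p \neq 3$ and $p$ is an odd prime, $p - 1$ is even and $p-1 > 2$, so $\mathbb{Z}_{p-1}$ (and therefore the whole product) contains an element of order greater than $2$ — indeed $p-1$ is divisible by $4$ or by an odd prime. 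Either way the group fails to be boolean. This is the step where the hypothesis $p \neq 3$ is essential: for $p = 3$ one has $p - 1 = 2$ and the factor $\mathbb{Z}_{p-1}$ is itself boolean, which is exactly why the case $p=3$ (handled in Theorem \ref{r4}) behaves differently.

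Having excluded $\alpha \geq k$, the only remaining possibility is $\alpha < k$, which is the desired conclusion. I expect the main (and really the only) subtlety to be the careful bookkeeping in the $\alpha = k$ case: one must point out that among the direct factors $U^{k}(\mathbb{Z}_p), U^{k-1}(\mathbb{Z}_p),\dots, U(\mathbb{Z}_p)$, the last one $U(\mathbb{Z}_p)\approx\mathbb{Z}_{p-1}$ is present and, for $p\neq 3$, is not a boolean group; all the other factors being boolean (or trivial) does not rescue the product. The $\alpha > k$ case is immediate once the $\mathbb{Z}_{p^{\alpha-k}}$ factor is identified. No genuine obstacle is anticipated; the argument is a direct reading-off of Theorem \ref{th1} combined with the elementary fact that a product of finite cyclic groups is boolean iff each factor is $\mathbb{Z}_1$ or $\mathbb{Z}_2$.
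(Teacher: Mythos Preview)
Your proposal is correct and follows essentially the same approach as the paper: both argue by contradiction on $\alpha\geq k$, invoke the decomposition of Theorem~\ref{th1}, and in the case $\alpha=k$ isolate the factor $U(\mathbb{Z}_p)\approx\mathbb{Z}_{p-1}$ (which fails to be boolean for $p\neq 3$), while in the case $\alpha>k$ isolate the factor $\mathbb{Z}_{p^{\alpha-k}}$. Your write-up is slightly more explicit about \emph{why} $\mathbb{Z}_{p-1}$ is not boolean when $p\neq 3$, but the underlying argument is identical.
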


\begin{proof}
Let $p$ be an odd prime different from $3$ and suppose that $U^{k}\left(
\mathbb{Z}_{p^{\alpha}}\right)  $ is a boolean ring. Assume for contradiction
that $\alpha\geq k$. If $\alpha=k,$ then by Theorem \ref{th1} , we have
\[
U^{k}\left(  \mathbb{Z}_{p^{k}}\right)  \approx U^{k}\left(  \mathbb{Z}
_{p}\right)  \times U^{k-1}\left(  \mathbb{Z}_{p}\right)  \times\cdots\times
U^{2}\left(  \mathbb{Z}_{p}\right)  \times U\left(  \mathbb{Z}_{p}\right)  .
\]
Hence, $U^{k}\left(  \mathbb{Z}_{p^{k}}\right)  $ is boolean if and only if
$U\left(  \mathbb{Z}_{p}\right)  $ is a boolean ring implies that $p=3$ a contradiction.

Now, suppose that $\alpha>k.$ By Theorem \ref{th1}, we have
\[
U^{k}\left(  \mathbb{Z}_{p^{\alpha}}\right)  \approx U^{k}\left(  \mathbb{Z}
_{p}\right)  \times U^{k-1}\left(  \mathbb{Z}_{p}\right)  \times\cdots\times
U\left(  \mathbb{Z}_{p}\right)  \times\mathbb{Z}_{p^{\alpha-k}}.
\]
Hence, $\mathbb{Z}_{p^{\alpha-k}}$ is boolean or trivial, a contradiction.
Therefore, $\alpha<k.$
\end{proof}

\begin{lemma}
\label{d11}Let $p$ be an odd prime$.$ If $U^{k}\left(  \mathbb{Z}_{p^{\alpha}%
}\right)  $ is trivial, then $\alpha<k.$
\end{lemma}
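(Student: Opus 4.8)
The plan is to prove the contrapositive flavoured statement directly: assume $U^{k}(\mathbb{Z}_{p^{\alpha}})$ is trivial and deduce $\alpha<k$. I would argue by contradiction, supposing $\alpha\geq k$, and split into the two cases $\alpha=k$ and $\alpha>k$, mirroring the structure of Lemma \ref{d1}. The key input is Theorem \ref{th1}, which expresses $U^{k}(\mathbb{Z}_{p^{\alpha}})$ as a direct product whose last factor is either $U(\mathbb{Z}_p)$ (when $\alpha=k$) or $\mathbb{Z}_{p^{\alpha-k}}$ (when $\alpha>k$).

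First I would handle $\alpha>k$: by part (3) of Theorem \ref{th1}, $U^{k}(\mathbb{Z}_{p^{\alpha}})$ contains $\mathbb{Z}_{p^{\alpha-k}}$ as a direct factor, and since $\alpha-k\geq1$ and $p$ is an odd prime, $p^{\alpha-k}\geq3>1$, so this factor is nontrivial; hence the whole product is nontrivial, contradicting the hypothesis. Next I would handle $\alpha=k$: by part (2) of Theorem \ref{th1}, $U^{k}(\mathbb{Z}_{p^{k}})$ has $U(\mathbb{Z}_p)\approx\mathbb{Z}_{p-1}$ as a direct factor; since $p\geq3$ we have $p-1\geq2$, so $U(\mathbb{Z}_p)$ is nontrivial, and again the product is nontrivial, a contradiction. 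Therefore $\alpha<k$.

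There is really no serious obstacle here — the statement is an immediate consequence of Theorem \ref{th1}, exactly as Lemma \ref{d1} was, with the added simplicity that triviality is a cleaner condition than being boolean (we need only that \emph{some} factor is nontrivial, rather than that some factor fails to be boolean). The only mild care needed is to observe that, unlike in Lemma \ref{d1}, the case $p=3$ need not be excluded: even for $p=3$ the factor $U(\mathbb{Z}_3)\approx\mathbb{Z}_2$ (when $\alpha=k$) and the factor $\mathbb{Z}_{3^{\alpha-k}}$ (when $\alpha>k$) are both nontrivial. So the lemma holds uniformly for all odd primes $p$, and indeed it is consistent with Corollary \ref{b7}, which shows $U^{k}(\mathbb{Z}_{3^{\alpha}})$ is trivial precisely when $\alpha<k$.

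Alternatively, and perhaps more cleanly, one could bypass the case split by invoking Corollary \ref{th2}: when $\alpha\geq k$, the decomposition of $U^{k}(\mathbb{Z}_{p^{\alpha}})$ has as its last factor either $\mathbb{Z}_{p-1}$ (if $\alpha=k$) or $\mathbb{Z}_{p^{\alpha-k}(p-1)}$ (if $\alpha>k$), and in either case $p-1\geq2$ forces this factor, and hence the group, to be nontrivial. I would present the case-split version as the main argument since it parallels Lemma \ref{d1} and keeps the exposition self-contained, but either route settles the claim in a few lines.
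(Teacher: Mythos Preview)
Your argument is correct and matches the paper's proof essentially line for line: both proceed by contradiction, split into the cases $\alpha=k$ and $\alpha>k$, and use Theorem \ref{th1} to exhibit a nontrivial direct factor ($U(\mathbb{Z}_p)\approx\mathbb{Z}_{p-1}$ or $\mathbb{Z}_{p^{\alpha-k}}$, respectively). Your additional remarks on why $p=3$ need not be excluded and the alternative via Corollary \ref{th2} are sound but go slightly beyond what the paper records.
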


\begin{proof}
Let $p$ be an odd prime and suppose that $U^{k}\left(  \mathbb{Z}_{p^{\alpha}%
}\right)  $ is trivial. Suppose that $\alpha=k$, then by Theorem \ref{th1}, we
have
\[
U^{k}\left(  \mathbb{Z}_{p^{k}}\right)  \approx U^{k}\left(  \mathbb{Z}
_{p}\right)  \times U^{k-1}\left(  \mathbb{Z}_{p}\right)  \times\cdots\times
U^{2}\left(  \mathbb{Z}_{p}\right)  \times U\left(  \mathbb{Z}_{p}\right)
\text{ .}%
\]
Hence, $U\left(  \mathbb{Z}_{p}\right)  $ is trivial, since $U^{k}\left(
\mathbb{Z}_{p^{k}}\right)  $ is trivial if and only if
\[
U^{k}\left(  \mathbb{Z}_{p}\right)  \approx U^{k-1}\left(  \mathbb{Z}
_{p}\right)  \approx\cdots\approx U\left(  \mathbb{Z}_{p}\right)
\approx\left\{  0\right\}  .
\]
But $U\left(  \mathbb{Z}_{p}\right)  $ is trivial implies that $p=1,2,$ a
contradiction. Therefore $\alpha\neq k.$

Now, suppose that $\alpha>k.$ By Theorem \ref{th1}, we have
\[
U^{k}\left(  \mathbb{Z}_{p^{\alpha}}\right)  \approx U^{k}\left(  \mathbb{Z}
_{p}\right)  \times U^{k-1}\left(  \mathbb{Z}_{p}\right)  \times\cdots\times
U\left(  \mathbb{Z}_{p}\right)  \times\mathbb{Z}_{p^{\alpha-k}}.
\]
But $U^{k}\left(  \mathbb{Z}_{p^{\alpha}}\right)  $ is trivial if and only if
\[
U^{k-1}\left(  \mathbb{Z}_{p-1}\right)  \approx U^{k-2}\left(  \mathbb{Z}
_{p-1}\right)  \approx\cdots\approx U\left(  \mathbb{Z}_{p}\right)
\approx\mathbb{Z}_{p^{\alpha-k}}\approx\left\{  0\right\}  ;
\]
a contradiction, as $\mathbb{Z}_{p^{\alpha-k}}$ is never trivial. Therefore,
$\alpha<k.$
\end{proof}

From the previous two Lemmas we conclude one of necessary condition to have
$U^{k}\left(  \mathbb{Z}_{p^{\alpha}}\right)  $ a boolean ring or a trivial
one which is $\alpha<k$. Next, we find the sufficient condition to obtain
these rings.

\begin{theorem}
\label{th22}Let $p$ be an odd prime different from 3. Then $U^{k}\left(
\mathbb{Z}_{p^{\alpha}}\right)  $ is boolean ring if and only if $\alpha<k$
and $U^{k-\alpha+1}\left(  \mathbb{Z}_{p}\right)  $ is a boolean ring.
Moreover,%
\[
U^{k}\left(  \mathbb{Z}_{p^{\alpha}}\right)  \approx U^{k-\alpha+1}\left(
\mathbb{Z}_{p}\right)  .
\]

\end{theorem}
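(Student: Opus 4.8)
The plan is to derive the statement directly from the decomposition of Theorem~\ref{th1}(1), using one elementary fact: any group of units occurring here which is a boolean ring is, by the ring structure described in Section~2, ring-isomorphic to a product $\mathbb{Z}_{2}\oplus\cdots\oplus\mathbb{Z}_{2}$, and if this product is non-trivial its group of units is $U(\mathbb{Z}_{2})\times\cdots\times U(\mathbb{Z}_{2})\approx\{0\}$; hence $U^{i}$ of a non-trivial boolean ring is trivial for every $i\ge1$. I will also use that a finite direct sum of commutative rings with identity is boolean if and only if every summand is (checked coordinatewise), and that the isomorphisms of Theorem~\ref{th1} may be read as ring isomorphisms (Theorem~\ref{int 2}).

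For the ``only if'' direction, suppose $U^{k}(\mathbb{Z}_{p^{\alpha}})$ is boolean. Since $p\neq3$, Lemma~\ref{d1} gives $\alpha<k$, so Theorem~\ref{th1}(1) yields
$$U^{k}(\mathbb{Z}_{p^{\alpha}})\approx U^{k}(\mathbb{Z}_{p})\times U^{k-1}(\mathbb{Z}_{p})\times\cdots\times U^{k-\alpha+1}(\mathbb{Z}_{p}).$$
Every factor must then be boolean, in particular $U^{k-\alpha+1}(\mathbb{Z}_{p})$. To see that it is \emph{non-trivial} boolean, observe that if $U^{k-\alpha+1}(\mathbb{Z}_{p})\approx\{0\}$ then, iterating $U$, all factors $U^{j}(\mathbb{Z}_{p})$ with $j\ge k-\alpha+1$ would be trivial, forcing $U^{k}(\mathbb{Z}_{p^{\alpha}})$ to be trivial, contrary to hypothesis.

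For the converse, assume $\alpha<k$ and $U^{m}(\mathbb{Z}_{p})$ is boolean, where $m=k-\alpha+1$ (so $2\le m\le k$). By the elementary fact above, $U^{j}(\mathbb{Z}_{p})\approx\{0\}$ for every $j>m$, so Theorem~\ref{th1}(1) collapses to
$$U^{k}(\mathbb{Z}_{p^{\alpha}})\approx U^{k}(\mathbb{Z}_{p})\times\cdots\times U^{m+1}(\mathbb{Z}_{p})\times U^{m}(\mathbb{Z}_{p})\approx\{0\}\times\cdots\times\{0\}\times U^{m}(\mathbb{Z}_{p})\approx U^{k-\alpha+1}(\mathbb{Z}_{p}),$$
which is boolean. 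The same collapse occurs in the ``only if'' case once $U^{k-\alpha+1}(\mathbb{Z}_{p})$ is known to be non-trivial boolean, so the displayed isomorphism holds whenever either of the equivalent conditions does.

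I do not expect a genuine obstacle; the one point needing care is the paper's convention (visible in Theorem~\ref{r4}) that ``boolean'' is reserved for \emph{non-trivial} boolean rings, which is exactly why the degenerate case $U^{k-\alpha+1}(\mathbb{Z}_{p})\approx\{0\}$ must be excluded rather than counted as boolean. Note also that $p\neq3$ enters only through the appeal to Lemma~\ref{d1} (for $p=3$ one gets $\alpha=k$ instead, as in Theorem~\ref{r4}), while the converse implication and the isomorphism need no restriction on the odd prime $p$. If preferred, the argument runs equally well through Corollary~\ref{th2}(1) and Lemma~\ref{c2}, writing $U^{k-\alpha}(\mathbb{Z}_{p-1})$ in place of $U^{k-\alpha+1}(\mathbb{Z}_{p})$ throughout.
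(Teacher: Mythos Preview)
Your proposal is correct and follows essentially the same route as the paper: invoke Lemma~\ref{d1} to get $\alpha<k$, apply Theorem~\ref{th1}(1) to decompose $U^{k}(\mathbb{Z}_{p^{\alpha}})$ into the factors $U^{k}(\mathbb{Z}_{p}),\ldots,U^{k-\alpha+1}(\mathbb{Z}_{p})$, and then use that a boolean ring has trivial higher $U^{i}$ to collapse all but the last factor. Your explicit verification that $U^{k-\alpha+1}(\mathbb{Z}_{p})$ is \emph{non-trivial} (rather than merely boolean-or-trivial) is a point the paper's proof leaves implicit, so your argument is slightly more careful there.
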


\begin{proof}
Let $p$ be an odd prime different from 3 and let $U^{k}\left(  \mathbb{Z}
_{p^{\alpha}}\right)  $ be a boolean ring. Then from Lemma \ref{d1}, we obtain
that $\alpha<k.$ Also by Theorem \ref{th1},
\[
U^{k}\left(  \mathbb{Z}_{p^{\alpha}}\right)  \approx U^{k}\left(  \mathbb{Z}
_{p}\right)  \times U^{k-1}\left(  \mathbb{Z}_{p}\right)  \times\cdots\times
U^{k-\alpha+1}\left(  \mathbb{Z}_{p}\right)  .
\]
Then, $U^{k-\alpha+1}\left(  \mathbb{Z}_{p}\right)  $ is a boolean ring also
\[
U^{k-\alpha+2}\left(  \mathbb{Z}_{p}\right)  \approx U^{k-\alpha+3}\left(
\mathbb{Z}_{p}\right)  \approx\cdots\approx U^{k-\alpha+\alpha}\left(
\mathbb{Z}_{p}\right)  \approx\left\{  0\right\}  .
\]
Therefore, $U^{k}\left(  \mathbb{Z}_{p^{\alpha}}\right)  \approx
U^{k-\alpha+1}\left(  \mathbb{Z}_{p}\right)  .$

Conversely, let $\alpha<k$ and $U^{k-\alpha+1}\left(  \mathbb{Z}_{p}\right)  $
is a boolean ring. Then \newline$U^{s}(U^{k-\alpha+1}\left(  \mathbb{Z}%
_{p}\right)  )$ $\approx\left\{  0\right\}  ,$ when $s=1,2\cdots,\alpha-1.$ That
is,
\[
U^{k}\left(  \mathbb{Z}_{p}\right)  \approx U^{k-1}\left(  \mathbb{Z}
_{p}\right)  \approx\cdots\approx U^{k-\alpha+2}\left(  \mathbb{Z}
_{p}\right)  \approx\left\{  0\right\}  .
\]
Therefore, $U^{k}\left(  \mathbb{Z}_{p^{\alpha}}\right)  \approx
U^{k-\alpha+1}\left(  \mathbb{Z}_{p}\right)  $, which is a boolean ring.
\end{proof}

\begin{theorem}
\label{d2}Let $p$ be an odd prime. Then $U^{k}\left(  \mathbb{Z}_{p^{\alpha}%
}\right)  $ is trivial if and only if $\alpha<k$ and $U^{k-\alpha+1}\left(
\mathbb{Z}_{p}\right)  \approx\left\{  0\right\}  $.
\end{theorem}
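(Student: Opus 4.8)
The plan is to mirror the structure of Theorem \ref{th22}, using the decomposition from Theorem \ref{th1} together with the observation that a finite direct product of abelian groups is trivial if and only if every factor is trivial. First I would invoke Lemma \ref{d11}: if $U^{k}(\mathbb{Z}_{p^{\alpha}})$ is trivial then necessarily $\alpha<k$, so the forward direction immediately reduces to the regime $\alpha<k$ where part (1) of Theorem \ref{th1} applies, namely
\[
U^{k}\left(  \mathbb{Z}_{p^{\alpha}}\right)  \approx U^{k}\left(
\mathbb{Z}_{p}\right)  \times U^{k-1}\left(  \mathbb{Z}_{p}\right)
\times\cdots\times U^{k-\alpha+1}\left(  \mathbb{Z}_{p}\right)  .
\]

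Next, for the forward direction, I would argue that since the product on the right is trivial, each factor $U^{j}(\mathbb{Z}_{p})$ for $j = k-\alpha+1, \dots, k$ must be trivial; in particular $U^{k-\alpha+1}(\mathbb{Z}_{p}) \approx \{0\}$, which is exactly the claimed condition. For the converse, suppose $\alpha<k$ and $U^{k-\alpha+1}(\mathbb{Z}_{p}) \approx \{0\}$. Then applying the functor $U$ repeatedly to the trivial group keeps it trivial, so $U^{k-\alpha+2}(\mathbb{Z}_{p}) \approx U^{k-\alpha+3}(\mathbb{Z}_{p}) \approx \cdots \approx U^{k}(\mathbb{Z}_{p}) \approx \{0\}$; that is, every factor in the displayed decomposition is trivial, hence so is the product, and therefore $U^{k}(\mathbb{Z}_{p^{\alpha}})$ is trivial. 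This is the same bookkeeping used in the proof of Theorem \ref{th22}, just with ``boolean'' replaced by ``trivial'' and noting that triviality of the bottom factor forces triviality of all the later ones automatically (one does not even need the monotone-size argument separately, since $U^{j}$ of the trivial group is the trivial group).

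One subtlety worth addressing explicitly: unlike Theorem \ref{th22}, here we do not need to exclude $p=3$, because the obstruction there was that $U(\mathbb{Z}_{p})$ boolean forces $p=3$ — a genuine restriction — whereas $U(\mathbb{Z}_{p})$ trivial forces $p \in \{1,2\}$, which is already impossible for an odd prime, so Lemma \ref{d11} handles the $\alpha \geq k$ cases uniformly for all odd $p$, including $p=3$. I expect no real obstacle here; the only care needed is the clean statement that a finite direct product is trivial iff each factor is, and that $U$ sends the trivial ring to the trivial group, both of which are elementary. The main point is simply to assemble Lemma \ref{d11} and Theorem \ref{th1}(1) correctly and to track the index range $k-\alpha+1 \le j \le k$ so that the singled-out factor is indeed $U^{k-\alpha+1}(\mathbb{Z}_{p})$.
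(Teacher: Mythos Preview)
Your proposal is correct and follows essentially the same route as the paper: invoke Lemma \ref{d11} to force $\alpha<k$, apply the decomposition of Theorem \ref{th1}(1), and use that a finite product is trivial iff each factor is, together with the fact that iterating $U$ on the trivial group stays trivial. Your additional remark explaining why the exclusion $p\neq 3$ is unnecessary here (in contrast to Theorem \ref{th22}) is a nice clarification that the paper leaves implicit.
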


\begin{proof}
Let $p$ be an odd prime and suppose that $U^{k}\left(  \mathbb{Z}_{p^{\alpha}%
}\right)  $ is trivial. Then from Lemma \ref{d11}, we obtain that $\alpha<k. $
Also by Theorem \ref{th1},
\[
U^{k}\left(  \mathbb{Z}_{p^{\alpha}}\right)  \approx U^{k}\left(  \mathbb{Z}
_{p}\right)  \times U^{k-1}\left(  \mathbb{Z}_{p}\right)  \times\cdots\times
U^{k-\alpha+1}\left(  \mathbb{Z}_{p}\right)  \approx\left\{  0\right\}  .
\]
Therefore, $U^{k-\alpha+1}\left(  \mathbb{Z}_{p}\right)  \approx\left\{
0\right\}  .$

Conversely, let $\alpha<k$ and $U^{k-\alpha+1}\left(  \mathbb{Z}_{p}\right)
\approx\left\{  0\right\}  .$ Then $U^{s}(U^{k-\alpha+1}\left(  \mathbb{Z}
_{p}\right)  )$ $\approx\left\{  0\right\}  ,$ when $s=0,1,2\cdots,\alpha-1$.
That is,
\[
U^{k}\left(  \mathbb{Z}_{p}\right)  \approx U^{k-1}\left(  \mathbb{Z}
_{p}\right)  \approx\cdots\approx U^{k-\alpha+1}\left(  \mathbb{Z}
_{p}\right)  \approx\left\{  0\right\}  .
\]
Therefore, $U^{k}\left(  \mathbb{Z}_{p^{\alpha}}\right)  \approx\left\{
0\right\}  .$
\end{proof}

Next, we take an example to check if a given ring $U^{k}\left(  \mathbb{Z}%
_{p^{\alpha}}\right)  $ is a boolean ring or trivial. Let us suppose
$U^{k}\left(  \mathbb{Z}_{p^{\alpha}}\right)  =U^{9}(\mathbb{Z}_{23^{6}})$.
Starting with the necessary condition that $\alpha<k$ else it is not a boolean
neither a trivial group. Along we check the nature of $U^{k-\alpha+1}\left(
\mathbb{Z}_{p}\right)  =U^{9-6+1}(\mathbb{Z}_{23})=U^{4}(\mathbb{Z}_{23})$. We have

\begin{enumerate}
\item $U(\mathbb{Z}_{23})\approx\mathbb{Z}_{22}\approx\mathbb{Z}_{2}%
\times\mathbb{Z}_{11}$,

\item $U^{2}(\mathbb{Z}_{23})\approx U\left(  \mathbb{Z}_{11}\right)
\approx\mathbb{Z}_{10}\approx\mathbb{Z}_{2}\times\mathbb{Z}_{5}$,

\item $U^{3}(\mathbb{Z}_{23})\approx U\left(  \mathbb{Z}_{5}\right)
\approx\mathbb{Z}_{4}$ and

\item $U^{4}(\mathbb{Z}_{23})\approx U\left(  \mathbb{Z}_{4}\right)
\approx\mathbb{Z}_{2}$.
\end{enumerate}

Therefore, $U^{9}(\mathbb{Z}_{23^{6}})$ is a boolean ring with $U^{9}%
(\mathbb{Z}_{23^{6}})\approx U^{4}(\mathbb{Z}_{23})\approx\mathbb{Z}_{2}$.

\begin{theorem}
\label{d4}Let $p$ be an odd prime and let $k>0,\alpha>0$ and $t>0$. Then
$U^{k}\left(  \mathbb{Z}_{p^{\alpha}}\right)  $ is a boolean ring if and only
if $U^{k+t}\left(  \mathbb{Z}_{p^{\alpha+t}}\right)  $ is boolean ring.
Moreover, $U^{k}\left(  \mathbb{Z}_{p^{\alpha}}\right)  \approx U^{k+t}\left(
\mathbb{Z}_{p^{\alpha+t}}\right)  $.
\end{theorem}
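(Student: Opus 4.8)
The plan is to establish the statement for $t=1$ and then obtain the general case by iterating that step. The main ingredient is the isomorphism (\ref{eq 1}) proved inside Theorem \ref{c3}: replacing $k$ by $k+1$ and $\alpha$ by $\alpha+1$ there (legitimate since $\alpha+1\geq 2$ and $k+1\geq 1$) gives
\[
U^{k+1}\left(\mathbb{Z}_{p^{\alpha+1}}\right)\approx U^{k+1}\left(\mathbb{Z}_{p}\right)\times U^{k}\left(\mathbb{Z}_{p^{\alpha}}\right).
\]
Alongside this I would use two elementary facts: a finite direct product of rings is boolean if and only if each factor is boolean; and the group of units of a boolean ring is trivial (if $u$ is a unit and $u^{2}=u$ then $u=1$).

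I would first record, by reading off the first factor in each of the three cases of Theorem \ref{th1}, that $U^{k}\left(\mathbb{Z}_{p}\right)$ always appears as a direct factor of $U^{k}\left(\mathbb{Z}_{p^{\alpha}}\right)$, for every $\alpha\geq 1$ and $k\geq 1$. Hence, if $U^{k}\left(\mathbb{Z}_{p^{\alpha}}\right)$ is boolean then $U^{k}\left(\mathbb{Z}_{p}\right)$ is boolean, so $U^{k+1}\left(\mathbb{Z}_{p}\right)=U\bigl(U^{k}(\mathbb{Z}_{p})\bigr)$ is trivial, and the displayed isomorphism collapses to $U^{k+1}\left(\mathbb{Z}_{p^{\alpha+1}}\right)\approx U^{k}\left(\mathbb{Z}_{p^{\alpha}}\right)$; in particular the left-hand side is boolean. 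This proves one direction of the $t=1$ case together with the asserted isomorphism. For the converse, suppose $U^{k+1}\left(\mathbb{Z}_{p^{\alpha+1}}\right)$ is boolean. Then, by the displayed isomorphism, both $U^{k+1}\left(\mathbb{Z}_{p}\right)$ and $U^{k}\left(\mathbb{Z}_{p^{\alpha}}\right)$ are boolean; so $U^{k}\left(\mathbb{Z}_{p}\right)$ (a direct factor of $U^{k}\left(\mathbb{Z}_{p^{\alpha}}\right)$) is boolean, $U^{k+1}\left(\mathbb{Z}_{p}\right)$ is trivial, and the isomorphism again reduces to $U^{k+1}\left(\mathbb{Z}_{p^{\alpha+1}}\right)\approx U^{k}\left(\mathbb{Z}_{p^{\alpha}}\right)$.

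To pass from $t=1$ to arbitrary $t$, I would apply the case just proved to the pair $(k+i,\alpha+i)$ for $i=0,1,\dots,t-1$: each step shows that $U^{k+i}\left(\mathbb{Z}_{p^{\alpha+i}}\right)$ is boolean precisely when $U^{k+i+1}\left(\mathbb{Z}_{p^{\alpha+i+1}}\right)$ is boolean, with the two rings isomorphic in that event. Composing these equivalences and isomorphisms yields the statement of the theorem.

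The only delicate point — and the main, very mild, obstacle — is verifying that $U^{k}\left(\mathbb{Z}_{p}\right)$ is genuinely a direct factor of $U^{k}\left(\mathbb{Z}_{p^{\alpha}}\right)$ for all $\alpha\geq 1$, the case $\alpha=1$ being degenerate; this is immediate from the decomposition in Theorem \ref{th1}. One can also sidestep it by splitting on $p$: for $p=3$, Theorem \ref{r4}(2) makes ``boolean'' equivalent to $\alpha=k$, which is invariant under $(k,\alpha)\mapsto(k+t,\alpha+t)$ and in that case forces both rings to equal $\mathbb{Z}_{2}$; for $p\neq 3$, Theorem \ref{th22} makes ``boolean'' equivalent to the conjunction of $\alpha<k$ and ``$U^{k-\alpha+1}\left(\mathbb{Z}_{p}\right)$ boolean'', and both $\alpha<k$ and the index $k-\alpha+1$ are unchanged by $(k,\alpha)\mapsto(k+t,\alpha+t)$, with $U^{k}\left(\mathbb{Z}_{p^{\alpha}}\right)\approx U^{k-\alpha+1}\left(\mathbb{Z}_{p}\right)\approx U^{k+t}\left(\mathbb{Z}_{p^{\alpha+t}}\right)$ by the same theorem.
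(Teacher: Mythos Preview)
Your argument is correct. Your main route is in fact a bit cleaner than the paper's: the paper splits on $p=3$ versus $p\neq 3$ and, in the latter case, first invokes Lemma~\ref{d1} to get $\alpha<k$, then uses the full expansion of Theorem~\ref{th1} together with the characterization of Theorem~\ref{th22} to identify $U^{k}(\mathbb{Z}_{p^{\alpha}})$ with $U^{k-\alpha+1}(\mathbb{Z}_{p})$ and deduce that $U^{k+1}(\mathbb{Z}_{p})$ is trivial; the $p=3$ case is handled separately via Corollary~\ref{b7}. You instead work directly with the one-step recurrence (\ref{eq 1}) and the observation that $U^{k}(\mathbb{Z}_{p})$ is always a direct factor of $U^{k}(\mathbb{Z}_{p^{\alpha}})$, which suffices to kill the extra factor $U^{k+1}(\mathbb{Z}_{p})$ uniformly for every odd prime without any case distinction or appeal to $\alpha<k$. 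The alternative you sketch in your final paragraph---splitting on $p=3$ and $p\neq 3$ and reducing ``boolean'' to a condition depending only on $k-\alpha$ via Theorems~\ref{r4}(2) and~\ref{th22}---is essentially the paper's own approach; so you have both recovered the paper's proof and found a tidier one.
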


\begin{proof}
Let $p$ be an odd prime and $U^{k}\left(  \mathbb{Z}_{p^{\alpha}}\right)  $ is
a boolean ring. Suppose $p\neq3$, from Lemma \ref{d1}, $\alpha<k,$ then
$\alpha+1<k+1.$ However, Theorem \ref{th1}, gives
\begin{align*}
U^{k+1}\left(  \mathbb{Z}_{p^{\alpha+1}}\right)   &  \approx U^{k+1}\left(
\mathbb{Z}_{p}\right)  \times U^{k}\left(  \mathbb{Z}_{p}\right)
\times\cdots\times U^{k+1-\left(  \alpha+1\right)  +1}\left(  \mathbb{Z}%
_{p}\right) \\
&  \approx U^{k+1}\left(  \mathbb{Z}_{p}\right)  \times U^{k}\left(
\mathbb{Z}_{p}\right)  \times\cdots\times U^{k-\alpha+1}\left(  \mathbb{Z}%
_{p}\right) \\
&  \approx U^{k+1}\left(  \mathbb{Z}_{p}\right)  \times U^{k}\left(
\mathbb{Z}_{p^{\alpha}}\right)  \text{.}%
\end{align*}
Also, from Corollary \ref{th2}, we have $U^{k}\left(  \mathbb{Z}_{p^{\alpha}%
}\right)  \approx U^{k-\alpha+1}\left(  \mathbb{Z}_{p}\right)  $ and are
boolean rings. Then $U^{\alpha}(U^{k-\alpha+1}\left(  \mathbb{Z}_{p}\right)
)=U^{k+1}\left(  \mathbb{Z}_{p}\right)  \approx\left\{  0\right\}  $.
Therefore, $U^{k+1}\left(  \mathbb{Z}_{p^{\alpha+1}}\right)  \approx
U^{k}\left(  \mathbb{Z}_{p^{\alpha}}\right)  $. Applying this isomorphism $t$
times, we obtain that $U^{k+t}\left(  \mathbb{Z}_{p^{\alpha+t}}\right)
\approx U^{k}\left(  \mathbb{Z}_{p^{\alpha}}\right)  .$

Conversely, let $U^{k+t}\left(  \mathbb{Z}_{p^{\alpha+t}}\right)  $ be a
boolean ring. Then by Theorem \ref{th22},%

\begin{align*}
U^{k+t}\left(  \mathbb{Z} _{p^{\alpha+t}}\right)   &  \approx U^{(k+t)-(\alpha
+t)+1}\left(  \mathbb{Z} _{p}\right) \\
&  =U^{k-\alpha+1}\left(  \mathbb{Z}_{p}\right)
\end{align*}

and are boolean rings. We may conclude that $U^{r}\left(  U^{k-\alpha
+1}\left(  \mathbb{Z} _{p}\right)  \right)  \approx\left\{  0\right\}  $ for
$r=1,2,\ldots,\alpha$. Thus we have
\begin{align*}
U^{k}\left(  \mathbb{Z}_{p^{\alpha}}\right)   &  \approx U^{k}\left(
\mathbb{Z} _{p}\right)  \times\cdots\times U^{k-\alpha+1}\left(  \mathbb{Z}%
_{p}\right) \\
&  \approx U^{k-\alpha+1}\left(  \mathbb{Z}_{p}\right)
\end{align*}

On the other hand, when $p=3,$ Corollary \ref{b7}, $\ $can be written
as\newline$U^{k}\left(  \mathbb{Z}_{3^{k}}\right)  \approx U^{k+t}\left(
\mathbb{Z} _{3^{k+t}}\right)  \approx\mathbb{Z}_{2}$.
\end{proof}

The following corollary is a direct consequence of Theorem \ref{d4}.

\begin{corollary}
Let $p$ be an odd prime and let $t>0,$ $k>0$ and $\alpha>0$. Then

\begin{enumerate}
\item $U^{k}\left(  \mathbb{Z}_{p^{\alpha}}\right)  $ is not a boolean ring if
and only if $U^{k+t}\left(  \mathbb{Z}_{p^{\alpha+t}}\right)  $ is not a
boolean ring.

\item $U^{k}\left(  \mathbb{Z}_{p^{\alpha}}\right)  $ is a trivial ring if and
only if $U^{k+t}\left(  \mathbb{Z}_{p^{\alpha+t}}\right)  $ is a trivial ring.

\item $U^{k}\left(  \mathbb{Z}_{p^{\alpha}}\right)  $ is nontrivial ring if
and only if $U^{k+t}\left(  \mathbb{Z}_{p^{\alpha+t}}\right)  $ is nontrivial ring.
\end{enumerate}
\end{corollary}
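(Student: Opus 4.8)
The plan is to read everything off Theorem \ref{d4}, which does the real work: it records the equivalence ``$U^{k}(\mathbb{Z}_{p^{\alpha}})$ is a boolean ring $\iff$ $U^{k+t}(\mathbb{Z}_{p^{\alpha+t}})$ is a boolean ring'', and, in that situation, the isomorphism $U^{k}(\mathbb{Z}_{p^{\alpha}})\approx U^{k+t}(\mathbb{Z}_{p^{\alpha+t}})$. Part (1) is then purely formal: negating both sides of an ``$A\iff B$'' statement gives ``$\neg A\iff\neg B$'', so ``not boolean $\iff$ not boolean'' is just the contrapositive form of Theorem \ref{d4} and requires no separate argument.

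For part (2), I would first observe that the trivial ring $\{0\}$ is a boolean ring, since $0^{2}=0$; thus whenever some $U^{i}(\mathbb{Z}_{p^{j}})$ is trivial it is in particular boolean. Now if $U^{k}(\mathbb{Z}_{p^{\alpha}})$ is trivial, then it is boolean, so Theorem \ref{d4} applies and yields both that $U^{k+t}(\mathbb{Z}_{p^{\alpha+t}})$ is boolean and that $U^{k+t}(\mathbb{Z}_{p^{\alpha+t}})\approx U^{k}(\mathbb{Z}_{p^{\alpha}})\approx\{0\}$, hence $U^{k+t}(\mathbb{Z}_{p^{\alpha+t}})$ is trivial. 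The converse is symmetric: if $U^{k+t}(\mathbb{Z}_{p^{\alpha+t}})$ is trivial it is boolean, Theorem \ref{d4} again gives $U^{k}(\mathbb{Z}_{p^{\alpha}})\approx U^{k+t}(\mathbb{Z}_{p^{\alpha+t}})\approx\{0\}$, and so $U^{k}(\mathbb{Z}_{p^{\alpha}})$ is trivial. The special case $p=3$ is subsumed by the same theorem. Part (3) is then just the contrapositive of part (2), so nothing further is needed.

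The only point that calls for a moment's care — and it is the closest thing to an obstacle here — is that one must invoke the \emph{strengthened} conclusion of Theorem \ref{d4}, not merely its boolean biconditional: since a boolean ring need not be trivial, knowing only that booleanness transfers is not enough to transport triviality, and one genuinely uses the accompanying isomorphism $U^{k}(\mathbb{Z}_{p^{\alpha}})\approx U^{k+t}(\mathbb{Z}_{p^{\alpha+t}})$ to conclude that $\{0\}$ goes to $\{0\}$. Once this is flagged, each of the three parts is a one-line deduction.
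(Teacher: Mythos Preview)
Your proposal is correct and matches the paper's approach exactly: the paper simply states that the corollary ``is a direct consequence of Theorem \ref{d4}'' without further detail, and what you have written is precisely the unpacking of that one line. Your observation that one must use the isomorphism $U^{k}(\mathbb{Z}_{p^{\alpha}})\approx U^{k+t}(\mathbb{Z}_{p^{\alpha+t}})$ (not just the boolean biconditional) to transport triviality is a useful clarification the paper leaves implicit.
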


\begin{lemma}
\label{d9}Let $p$ be a prime and let $0\leq t\leq\alpha$. If $U^{k}\left(
\mathbb{Z}_{p^{\alpha}}\right)  \approx\left\{  0\right\}  $, then
$U^{k}\left(  \mathbb{Z}_{p^{t}}\right)  \approx\left\{  0\right\}  $.
\end{lemma}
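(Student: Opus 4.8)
The plan is to treat the prime $p=2$ and the odd primes separately, in each case exploiting the explicit triviality criterion already proved: Theorem~\ref{b4} for $p=2$ and Theorem~\ref{d2} for odd $p$. The boundary value $t=0$ is immediate, since $\mathbb{Z}_{p^{0}}=\mathbb{Z}_{1}=\{0\}$ and $U^{k}(\{0\})\approx\{0\}$ for every $k$; moreover the hypothesis $U^{k}(\mathbb{Z}_{p^{\alpha}})\approx\{0\}$ forces $\alpha\geq 1$, so it suffices to treat $1\leq t\leq\alpha$.

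For $p=2$: Theorem~\ref{b4} (part (3) together with parts (1)--(2), or equivalently the remark following it) says that $U^{k}(\mathbb{Z}_{2^{\alpha}})\approx\{0\}$ \emph{if and only if} $\alpha<2k$. From the hypothesis we therefore get $\alpha<2k$, hence $t\leq\alpha<2k$, and the same criterion gives $U^{k}(\mathbb{Z}_{2^{t}})\approx\{0\}$. So the dyadic case is pure monotonicity in the exponent.

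For $p$ odd: by Theorem~\ref{d2} (read with exponent $\alpha$), the hypothesis yields both $\alpha<k$ and $U^{k-\alpha+1}(\mathbb{Z}_{p})\approx\{0\}$. Since $1\leq t\leq\alpha$, we have $t<k$ and $k-t+1\geq k-\alpha+1\geq 1$, so applying the group-of-units operation a further $\alpha-t\geq 0$ times to the trivial ring $U^{k-\alpha+1}(\mathbb{Z}_{p})$ gives
\[
U^{k-t+1}(\mathbb{Z}_{p})\approx U^{\alpha-t}\bigl(U^{k-\alpha+1}(\mathbb{Z}_{p})\bigr)\approx U^{\alpha-t}(\{0\})\approx\{0\}.
\]
Now $t<k$ and $U^{k-t+1}(\mathbb{Z}_{p})\approx\{0\}$, so Theorem~\ref{d2} (this time with $t$ playing the role of $\alpha$) gives $U^{k}(\mathbb{Z}_{p^{t}})\approx\{0\}$, as required.

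I do not expect any serious obstacle: the lemma is essentially a monotonicity-in-the-exponent statement, and in both cases it collapses to an elementary inequality ($\alpha<2k\Rightarrow t<2k$, respectively $k-t+1\geq k-\alpha+1$) together with the trivial observation that applying $U$ to the zero ring again produces the zero ring. The only points needing a touch of care are the degenerate value $t=0$ and making sure the cited results, which are stated for exponents $\geq 1$, are only invoked with exponents $\geq 1$.
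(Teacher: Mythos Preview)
Your proof is correct and follows essentially the same approach as the paper: split into $p=2$ (via Theorem~\ref{b4}) and odd $p$ (via Theorem~\ref{d2}), and in each case reduce to an elementary exponent inequality together with the stability of the zero ring under~$U$; the only cosmetic difference is that for odd $p$ you invoke the converse direction of Theorem~\ref{d2} to conclude, whereas the paper expands $U^{k}(\mathbb{Z}_{p^{t}})$ directly with Theorem~\ref{th1}. One small slip: the hypothesis does not literally ``force $\alpha\geq 1$'' (the case $\alpha=0$ is vacuously allowed), but since you have already disposed of $t=0$ and the remaining case $t\geq 1$ automatically gives $\alpha\geq t\geq 1$, the argument is unaffected.
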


\begin{proof}
Let $p$ be a prime and let $0\leq t\leq\alpha.$ Suppose $U^{k}\left(
\mathbb{Z}_{p^{\alpha}}\right)  \approx\left\{  0\right\}  $. By Theorem
\ref{d2}, we obtain that $\alpha<k.$ Now, since $0\leq t\leq\alpha,$ $0\leq
t<k$ and by Theorem \ref{th1}, we have
\[
U^{k}\left(  \mathbb{Z}_{p^{t}}\right)  \approx U^{k}\left(  \mathbb{Z}
_{p}\right)  \times U^{k-1}\left(  \mathbb{Z}_{p}\right)  \times\cdots\times
U^{k-t+1}\left(  \mathbb{Z}_{p}\right)  .
\]
Since $U^{k}\left(  \mathbb{Z}_{p^{\alpha}}\right)  \approx\left\{  0\right\}
$, Theorem \ref{d2} gives that $U^{k-\alpha+1}\left(  \mathbb{Z}_{p}\right)
\approx\left\{  0\right\}  $. However,\newline$U^{s}(U^{k-\alpha+1}\left(
\mathbb{Z} _{p}\right)  )$ $\approx\left\{  0\right\}  ,$ where
$s=0,1,2\cdots,t-1$. That is,
\[
U^{k}\left(  \mathbb{Z}_{p}\right)  \approx U^{k-1}\left(  \mathbb{Z}
_{p}\right)  \approx\cdots\approx U^{k-t+1}\left(  \mathbb{Z}_{p}\right)
\approx\left\{  0\right\}  .
\]
Therefore, $U^{k}\left(  \mathbb{Z}_{p^{t}}\right)  \approx\left\{  0\right\}
. $

Now, let $p=2.$ From Theorem \ref{b4}, $U^{k}\left(  \mathbb{Z}_{2^{\alpha}%
}\right)  \approx\left\{  0\right\}  $ if and only if $\alpha<2k.$ But $0\leq
t\leq\alpha<2k$. Therefore, $U^{k}\left(  \mathbb{Z}_{2^{t}}\right)
\approx\left\{  0\right\}  $.
\end{proof}

\begin{theorem}
\label{thdivisortrivial}Let $U^{k}\left(  \mathbb{Z}_{n}\right)
\approx\left\{  0\right\}  $. Then for all divisors $m$ of $n$, $U^{k}\left(
\mathbb{Z}_{m}\right)  \approx\left\{  0\right\}  $.
\end{theorem}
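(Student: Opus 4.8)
The plan is to reduce the statement to the prime-power case, which is precisely Lemma \ref{d9}, by means of the Chinese Remainder decomposition recorded in Theorem \ref{int 2}.

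First I would write the prime-power factorization $n = 2^{a} p_{1}^{\alpha_{1}} \cdots p_{r}^{\alpha_{r}}$, where the $p_{j}$ are distinct odd primes and $a \ge 0$ (with $a = 0$ meaning $n$ is odd). By Theorem \ref{int 2},
\[
U^{k}(\mathbb{Z}_{n}) \approx U^{k}(\mathbb{Z}_{2^{a}}) \times U^{k}(\mathbb{Z}_{p_{1}^{\alpha_{1}}}) \times \cdots \times U^{k}(\mathbb{Z}_{p_{r}^{\alpha_{r}}}).
\]
A finite direct product of groups is trivial if and only if each factor is trivial, so the hypothesis $U^{k}(\mathbb{Z}_{n}) \approx \{0\}$ forces $U^{k}(\mathbb{Z}_{2^{a}}) \approx \{0\}$ (when $a > 0$) and $U^{k}(\mathbb{Z}_{p_{j}^{\alpha_{j}}}) \approx \{0\}$ for every $j$.

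Next, let $m$ be a divisor of $n$. Then $m = 2^{b} p_{1}^{\beta_{1}} \cdots p_{r}^{\beta_{r}}$ with $0 \le b \le a$ and $0 \le \beta_{j} \le \alpha_{j}$, and applying Theorem \ref{int 2} once more,
\[
U^{k}(\mathbb{Z}_{m}) \approx U^{k}(\mathbb{Z}_{2^{b}}) \times U^{k}(\mathbb{Z}_{p_{1}^{\beta_{1}}}) \times \cdots \times U^{k}(\mathbb{Z}_{p_{r}^{\beta_{r}}}).
\]
Now I would invoke Lemma \ref{d9} factor by factor: for each odd prime $p_{j}$, since $0 \le \beta_{j} \le \alpha_{j}$ and $U^{k}(\mathbb{Z}_{p_{j}^{\alpha_{j}}}) \approx \{0\}$, the lemma gives $U^{k}(\mathbb{Z}_{p_{j}^{\beta_{j}}}) \approx \{0\}$; likewise, when $a > 0$, since $0 \le b \le a$ and $U^{k}(\mathbb{Z}_{2^{a}}) \approx \{0\}$, the $p = 2$ part of Lemma \ref{d9} gives $U^{k}(\mathbb{Z}_{2^{b}}) \approx \{0\}$. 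The degenerate exponents $b = 0$ or $\beta_{j} = 0$ correspond to the factor $\mathbb{Z}_{1} = \{0\}$, every group of units of which is trivial, and this is exactly the $t = 0$ case of Lemma \ref{d9}. Hence every factor on the right-hand side is trivial, so $U^{k}(\mathbb{Z}_{m}) \approx \{0\}$.

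There is essentially no serious obstacle here: the substantive content — descending from $p^{\alpha}$ to $p^{t}$ for $t \le \alpha$ — has already been carried out in Lemma \ref{d9}. The only care needed is the bookkeeping of the edge cases $a = 0$ (when $n$ is odd, so there is no $2$-part to track) and the vanishing exponents $b = 0$, $\beta_{j} = 0$, so that each appeal to Lemma \ref{d9} is within its stated range $0 \le t \le \alpha$.
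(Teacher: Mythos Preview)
Your proposal is correct and follows essentially the same approach as the paper: decompose $n$ and $m$ into prime powers via Theorem \ref{int 2}, observe that triviality of the product forces triviality of each factor, and then apply Lemma \ref{d9} prime-by-prime to descend from exponent $\alpha_j$ to $\beta_j$. Your treatment of the edge cases ($a=0$, vanishing exponents) is in fact slightly more explicit than the paper's.
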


\begin{proof}
Let $n=$ $2^{\alpha}p_{1}^{\alpha_{1}}p_{2}^{\alpha_{2}}\cdot\cdot\cdot
p_{i}^{\alpha_{i}}$ be the decomposition of $n$ into product of distinct prime
powers and let $U^{k}\left(  \mathbb{Z}_{n}\right)  \approx\left\{  0\right\}
$. Suppose that $m$ be a divisor of $n.$ Then $m=2^{\alpha^{\prime}}%
p_{1}^{\alpha_{1}^{\prime}}p_{2}^{\alpha_{2}^{\prime}}\cdot\cdot\cdot
p_{i}^{\alpha_{i}^{\prime}}$, where$\,$ $0\leq\alpha_{i}^{\prime}\leq
\alpha_{i}$ and $0\leq\alpha^{\prime}\leq\alpha$. We have
\[
U^{k}\left(  \mathbb{Z}_{n}\right)  \approx U^{k}\left(  \mathbb{Z}%
_{2^{\alpha}}\right)  \times U^{k}\left(  \mathbb{Z}_{p_{1}^{\alpha_{1}}%
}\right)  \times\cdots\times U^{k}\left(  \mathbb{Z}_{p_{i}^{\alpha_{i}}}\right)
.
\]
Hence, $U^{k}\left(  \mathbb{Z}_{2^{\alpha}}\right)  ,U\left(  \mathbb{Z}
_{p_{1}^{\alpha_{1}}}\right)  ,\cdots$ and $U^{k}\left(  \mathbb{Z}
_{p_{i}^{\alpha_{i}}}\right)  $ are trivial. By the Lemma \ref{d9}, we obtain
that $U^{k}\left(  \mathbb{Z}_{2^{\alpha^{\prime}}}\right)  $, $U\left(
\mathbb{Z}_{p_{1}^{\alpha_{1}^{\prime}}}\right)  $,$\cdots$ and $U^{k}\left(
\mathbb{Z}_{p_{i}^{\alpha_{i}^{\prime}}}\right)  $ are trivial. Therefore,
\[
U^{k}\left(  \mathbb{Z}_{2^{\alpha^{\prime}}}\right)  \times U^{k}\left(
\mathbb{Z}_{p_{1}^{\alpha_{1}^{\prime}}}\right)  \times\cdots\times U^{k}\left(
\mathbb{Z}_{p_{i}^{\alpha_{i}^{\prime}}}\right)  \approx\left\{  0\right\}  ,
\]
and $U^{k}\left(  \mathbb{Z}_{m}\right)  $ is trivial.
\end{proof}

\begin{theorem}
Let $U^{k}\left(  \mathbb{Z}_{n}\right)  $ be a boolean ring. Then for all
divisors $m$ of $n$, $U^{k}\left(  \mathbb{Z}_{m}\right)  \approx\left\{
0\right\}  $ or $U^{k}\left(  \mathbb{Z}_{m}\right)  $ is a boolean ring.

\begin{proof}
Let $U^{k}\left(  \mathbb{Z}_{n}\right)  $ be a boolean ring. then
$U^{k+1}\left(  \mathbb{Z}_{n}\right)  \approx\left\{  0\right\}  $. From
Theorem \ref{thdivisortrivial} we have for all divisors $m$ of $n$,
$U^{k+1}\left(  \mathbb{Z}_{m}\right)  \approx\left\{  0\right\}  $. The later
leads to two possibilities either $U^{k}\left(  \mathbb{Z}_{m}\right)
\approx\left\{  0\right\}  $ or $U^{k}\left(  \mathbb{Z}_{m}\right)  $ is a
boolean ring.
\end{proof}
\end{theorem}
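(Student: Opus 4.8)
The plan is to reduce the statement to the divisor-stability of triviality established in Theorem~\ref{thdivisortrivial}, using as a bridge the elementary fact that a finite commutative ring with identity is trivial or boolean if and only if its group of units is trivial. In our setting this fact is almost immediate: every iterated unit ring $S=U^{k}(\mathbb{Z}_{m})$ is, by Theorem~\ref{int 2} together with the ring-structure theorem quoted in Section~2, ring-isomorphic to a direct sum $\mathbb{Z}_{n_{1}}\oplus\cdots\oplus\mathbb{Z}_{n_{r}}$ of cyclic rings, so $U(S)\thickapprox U(\mathbb{Z}_{n_{1}})\times\cdots\times U(\mathbb{Z}_{n_{r}})$; by Lemma~\ref{int 1} (or the remark following it) each factor $U(\mathbb{Z}_{n_{j}})$ is trivial precisely when $n_{j}\in\{1,2\}$. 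Hence $U(S)$ trivial forces every $n_{j}\in\{1,2\}$, i.e.\ $S\cong\mathbb{Z}_{2}^{s}$ after discarding trivial summands, and this ring is boolean when $s\geq1$ and trivial when $s=0$.

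Granting this, here is how I would run the argument. Assume $U^{k}(\mathbb{Z}_{n})$ is a boolean ring. Then $U^{k+1}(\mathbb{Z}_{n})=U\!\left(U^{k}(\mathbb{Z}_{n})\right)\thickapprox\{0\}$, so $U^{k+1}(\mathbb{Z}_{n})$ is trivial. Now apply Theorem~\ref{thdivisortrivial} with $k$ replaced by $k+1$: for every divisor $m$ of $n$ we obtain $U^{k+1}(\mathbb{Z}_{m})\thickapprox\{0\}$. Rewriting $U^{k+1}(\mathbb{Z}_{m})=U\!\left(U^{k}(\mathbb{Z}_{m})\right)$, this says that the finite commutative ring $U^{k}(\mathbb{Z}_{m})$ has trivial group of units, and by the observation of the first paragraph it is therefore either the trivial ring or a boolean ring. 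This is exactly the assertion.

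The only point that is not pure bookkeeping is the equivalence ``trivial unit group $\iff$ boolean or trivial ring,'' and even that is mild because each $U^{k}(\mathbb{Z}_{m})$ already comes presented as a direct sum of the rings $\mathbb{Z}_{n_{i}}$; so the substantive content is entirely carried by Theorem~\ref{thdivisortrivial}, which we are allowed to cite. If one wishes to bypass the structural remark altogether, one can instead argue by contraposition: were $U^{k}(\mathbb{Z}_{m})$ neither trivial nor boolean, then $U^{k+1}(\mathbb{Z}_{m})=U\!\left(U^{k}(\mathbb{Z}_{m})\right)$ would be nontrivial, contradicting $U^{k+1}(\mathbb{Z}_{m})\thickapprox\{0\}$. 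Either way, I expect the main (and essentially the sole) obstacle to be phrasing correctly why a finite commutative ring with trivial unit group must be boolean or trivial; everything else is a direct application of the divisor theorem.
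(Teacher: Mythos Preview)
Your argument is correct and follows essentially the same route as the paper: pass from $U^{k}(\mathbb{Z}_{n})$ boolean to $U^{k+1}(\mathbb{Z}_{n})$ trivial, invoke Theorem~\ref{thdivisortrivial} at level $k+1$ to get $U^{k+1}(\mathbb{Z}_{m})$ trivial for each divisor $m$, and then conclude that $U^{k}(\mathbb{Z}_{m})$ is trivial or boolean. The only difference is that you spell out the justification for the last step (trivial unit group forces the ring to be a direct sum of copies of $\mathbb{Z}_{2}$), whereas the paper simply asserts it.
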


\section{Pratt's Tree and Decomposition of $U^{k}\left(
\mathbb{Z}
_{n}\right)  $}

V. Pratt in \cite{9} showed that short proofs of primality do exist, that is,
PRIMES is in NP where he introduced what so called Pratt certificate and Pratt
tree. The authors in \cite{10} discussed in details the dimensions of Pratt's
tree introduced in his paper. In this section, we relate Pratt's tree to the
complete structure of $U^{k}(%
\mathbb{Z}
_{n})$. The steps to find the decomposition $U^{k}(%
\mathbb{Z}
_{n})$ are similar to the step in determining the Pratt tree with same
structure and dimension, see Fig. 

of prime divisors of $n$, where all primes given in Pratt tree are used to
reach the decomposition of $U^{k}(%
\mathbb{Z}
_{n})$.

\noindent The primes in Pratt tree are the Prime chains $p_{1}\prec p_{2}%
\prec\cdots\prec p_{k}$ such that for which $p_{j+1}\equiv1\mod p_{j}$ in
other words, $p_{j}|p_{j+1}-1$ for each $j,$ see \cite{10}. By charting this
process, we find what is called a Lucas-Pratt tree \cite{11}. On the other
hand, we have%

\[
U^{k}\left(  \mathbb{Z}_{n}\right)  \approx U^{k}\left(  \mathbb{Z}%
_{p_{1}^{\alpha_{1}}}\right)  \times U^{k}\left(  \mathbb{Z}_{p_{2}%
^{\alpha_{2}}}\right)  \times\cdots\times U^{k}\left(  \mathbb{Z}_{p_{i}%
^{\alpha_{i}}}\right)  .
\]
If $p_{j}=2$ the decomposition is solved in Theorem \ref{b4}. While when
$p_{j}$ is an odd prime, it is clear from Corollary \ref{th2}, that to find
the decomposition of $U^{k}(\mathbb{Z}_{p_{j}^{\alpha_{j}}})$, we need to find
decomposition of $U^{i}\left(  \mathbb{Z}_{p_{j}-1}\right)  $, $i=1,2,..,k-1$.
which is also determined from the prime factors of $p_{j}-1$. which shall be
the Primes in the Pratt tree.

To illustrate the relation, we set $p_{j}=269$ and our aim is to determine the
decomposition of $U^{k}(%
\mathbb{Z}
_{269^{\alpha}})$,\ and relate it to Pratt Tree.

The Pratt Tree is obtained by the following steps:

\begin{description}
\item[Step 1] $p_{j}-1=268=2^{2}\times67$. First level in Pratt Tree is
$\left(  2,67\right)  $,

\item[Step 2] $67-1=66=2\times3\times11$ and Second Level in Pratt Tree is
$\left(  2,3,11\right)  ,$

\item[Step 3] $3-1=2$
also  $11-1=2\times5$ and thus Third Level in Pratt Tree is $\left(
2;2,5\right)  ,$

\item[Step 4] $5-1=2^{2}$ which give the Last level in the the Pratt Tree
$(2)$.
\end{description}

Next, we show the steps in determining the decomposition of $U^{k}(%
\mathbb{Z}
_{269^{\alpha}})$. Starting from Corollary \ref{th2}, which shows that the
decomposition of $U^{k}(%
\mathbb{Z}
_{269^{\alpha}})$ is determined from $U^{i}\left(  \mathbb{Z}_{p_{j}%
-1}\right)  $, $i=1,2,..,k-1$. Thus
\[
U^{i}\left(  \mathbb{Z}_{p_{i}-1}\right)  =U^{i}(%
\mathbb{Z}
_{268})\approx U^{i}(%
\mathbb{Z}
_{2^{2}})\times U^{i}(%
\mathbb{Z}
_{67}),
\]
$i=1,2,..,k-1$. Thus we get in this decomposition the First Level of Pratt
Tree $(2,67)$. Now, the decomposition of $U^{i}(%
\mathbb{Z}
_{2^{2}})$ can be determined from Theorem \ref{b4}, thus next we need to
determine the decomposition of $U^{i}(%
\mathbb{Z}
_{67})$.

Having $67-1=66=2\times3\times11$ thus we get
\[
U^{i}(%
\mathbb{Z}
_{67})\approx U^{i-1}(%
\mathbb{Z}
_{66})\approx U^{i-1}(%
\mathbb{Z}
_{2})\times U^{i-1}(%
\mathbb{Z}
_{3})\times U^{i-1}(%
\mathbb{Z}
_{11}),
\]
we get in this decomposition the Second Level of Pratt Tree $(2,3,11)$. Next,
we need the decomposition of $U^{i-1}(%
\mathbb{Z}
_{3})\ $\ and $U^{i-1}(%
\mathbb{Z}
_{11})$. In the same manner, we get
\[
U^{i-1}(%
\mathbb{Z}
_{3})\approx U^{i-2}(%
\mathbb{Z}
_{2})\text{ and }U^{i-1}(%
\mathbb{Z}
_{11})\approx U^{i-2}(%
\mathbb{Z}
_{2})\times U^{i-2}(%
\mathbb{Z}
_{5}).
\]
This is Third Level in Pratt Tree $(2)$ and $(2,5)$.

Finally with the last decomposition of
\[
U^{i-2}(%
\mathbb{Z}
_{5})\approx U^{i-3}(%
\mathbb{Z}
_{2^{2}})
\]
giving Last Level of Pratt Tree that is $(2)$.

As a consequence that all the decompositions will reach eventually to $U^{k}(%
\mathbb{Z}
_{2^{\alpha}})$ which is solved in Theorem \ref{b4}. Thus the decomposition of
$U^{k}(%
\mathbb{Z}
_{n})$ can be obtained.

\begin{figure}[h!]
	\centering
	\begin{subfigure}{.5\textwidth}
		\centering
		\includegraphics[width=.7\linewidth]{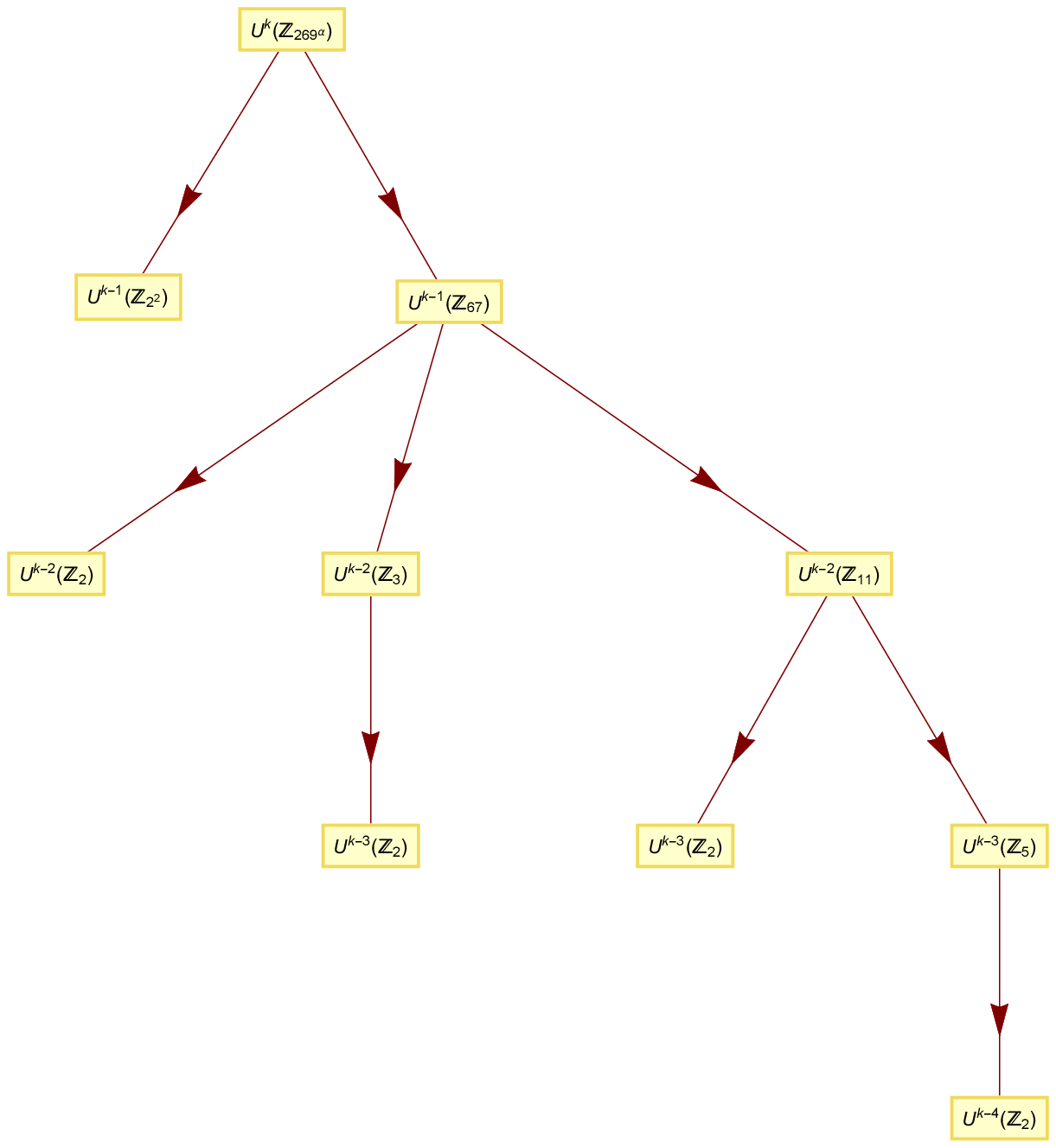}
		\caption{ Decomposition of $U^k(\mathbb{Z}_{269^\alpha})$}
	\label{fig:sub1}
	\end{subfigure}%
	\begin{subfigure}{.5\textwidth}
		\centering
		\includegraphics[width=.7\linewidth]{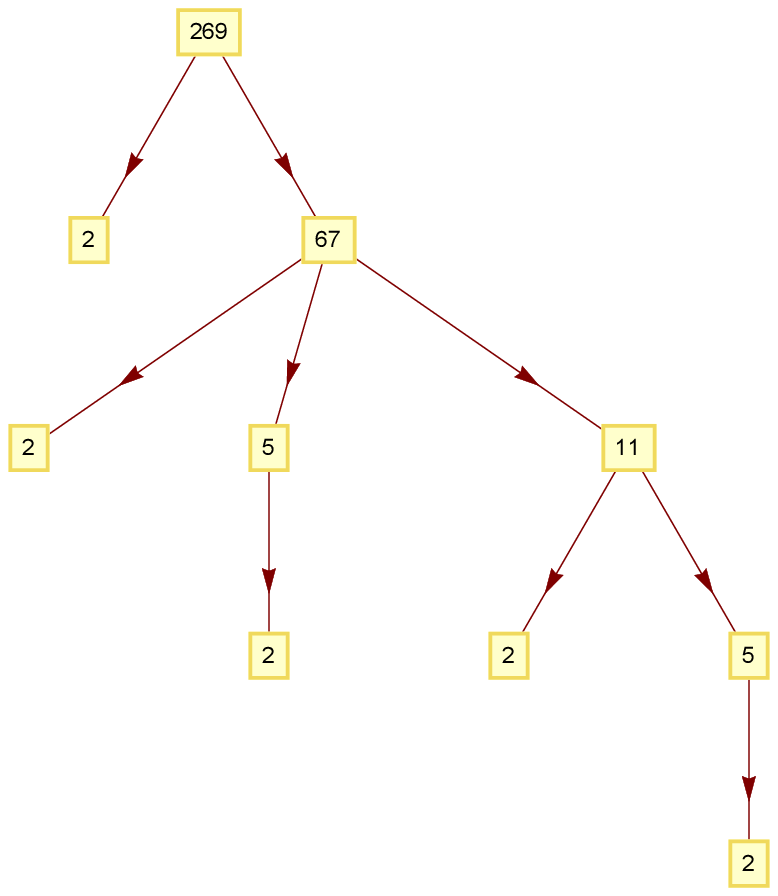}
		\caption{Pratt Tree of $269$}
		\label{fig:sub2}
	\end{subfigure}
	\caption{Equivalence between the decomposition $U^k(\mathbb{Z}_{269^\alpha})$ and Pratt Tree.}
	\label{fig:test}
\end{figure}

%


\end{document}